\newtheorem*{theorem*}{Theorem}
\newtheorem{theorem}{Theorem}[section]
\newtheorem{lemma}[theorem]{Lemma}
\newtheorem{remark}[theorem]{Remark}
\newtheorem*{remark*}{Remark}
\newtheorem{proposition}[theorem]{Proposition}
\renewcommand{\epsilon}{\varepsilon}
\title{Spectral asymptotics for a class of singular Sturm–Liouville operators with applications to magnetic Laplacian and $a$-zeros of Kummer functions}
\author{Roman Vanlaere \footnote{CEREMADE, Université Paris-Dauphine PSL, CNRS UMR 7534, 75016 Paris, France, roman.vanlaere@dauphine.psl.eu}}
\date{April, 2026}
\begin{document}

\maketitle

\begin{abstract}
    We provide a precise description of the bottom of the spectrum in the semiclassical limit of a harmonic-type Schrödinger operator with an inverse square potential. By exploiting the connection between the eigenfunctions of these operators and the Kummer and Whittaker functions, we derive accurate localization results for the non-asymptotic zeros of these functions with respect to their first parameter, uniformly with respect to the argument taken large and real. Moreover, our operators are linked to the magnetic Dirichlet Laplacian in the presence of both a
constant magnetic field and an Aharonov–Bohm flux line, so that our results describe its spectrum in the strong magnetic field limit. Our spectral analysis relies on a WKB-type approach.
\end{abstract}

\tableofcontents

\section{Introduction}

The goal of this paper is to give a precise description in the limit $\xi \to + \infty$ of the spectrum of the operators 
\begin{align*}
    -\partial_x^2 + \xi^2x^2 + \frac{\nu^2 - 1/4}{x^2}.
\end{align*}

These operators arise from the study of the magnetic Laplacian on a disk with Dirichlet boundary conditions, in the presence of both a constant magnetic field and an Aharonov–Bohm flux line through the origin, so that our results describe their spectrum in the strong field limit (see Section \ref{section: magnetic laplacian}). They are also closely related to Kummer and Whittaker functions, which appear in the expressions of their eigenfunctions, so that the spectral analysis of $G_\xi$ yields new results concerning the zeros, with respect to the first parameter, of this class of special functions (see Section \ref{section: zeros of kummer}). Finally, the operators $G_\xi$ correspond to the Fourier components, with respect to the $y$-variable, of the Grushin operator
\begin{align*}
    -\partial_x^2 - x^2 \partial_y^2 + \frac{\nu^2-1/4}{x^2},
\end{align*}
and the analysis of its spectrum has applications in control theory (see \cite{vanlaere2026observability}).

\subsection{Settings and main results}

Let $\xi > 0$ and $\nu \geq 0$. Set 
\begin{align*}
    G_\xi = -\partial_x^2 + \xi^2x^2 + \frac{\nu^2 - 1/4}{x^2},      
\end{align*}
defined on $C_c^\infty(0,1)$. Throughout the paper, $\nu \geq 0$ is a fixed parameter, while $\xi > 0$ is the semiclassical parameter with respect to which our results hold uniformly. We make an abuse of language here by calling $\xi$ the semiclassical parameter, but dividing $G_\xi$ by $\xi^2$, we are indeed in the semiclassical framework, with a singular semi-classical perturbation, as by setting $h = 1/\xi$,
\begin{align*}
    h^2G_\xi = -h^2\partial_x^2 + x^2 +h^2 \frac{\nu^2 - 1/4}{x^2}.
\end{align*}

We consider the Friedrich extension of $G_\xi$ in $L^2(0,1)$, and we keep the notation $G_\xi$. We first set $H_{0,\nu}^1(0,1)$ to be the completion of $C_c^\infty(0,1)$ with respect to the norm $\|\cdot\|_\nu$, where 
\begin{align}
    \|f\|_\nu := \left( \int_0^1 f'(x)^2 + \left( \xi^2x^2 + \frac{\nu^2 - 1/4}{x^2} \right)f(x)^2 \ dx\right)^{1/2}.
\end{align}
Recall the Hardy inequality (see \textit{e.g.} \cite{cannarsa2008carleman})
\begin{align}\label{eqn: hardy inequality}
    \int_0^1 \frac{u(x)^2}{x^2} \ dx \leq 4\int_0^1 u'(x)^2 \ dx, \quad \text{for every } u \in H^1((0,1),\mathbb{R}) \text{ with } u(0) = 0.
\end{align}

Thanks to \eqref{eqn: hardy inequality}, when $\nu > 0$, $\|\cdot\|_\nu$  and the usual $H_0^1(0,1)$-norm are equivalent. Thus, we have $H_{0,\nu}^1(0,1) = H_0^1(0,1)$. As shown in \cite[Section 5]{vazquez2000hardy}, in the critical case $\nu = 0$ we have the strict inclusion 
\begin{align*}
   H_0^1(0,1) \subsetneq H_{0,\nu}^1(0,1).
\end{align*}

We therefore consider the operators
\begin{align}\label{eqn: operator shrodinger singular}
    \begin{array}{ccl}
       D(G_\xi) &=& \left\{f \in H_{0,\nu}^1(0,1), \quad G_\xi f \in L^2(0,1) \right\},\\[6pt]
        G_\xi &=& \displaystyle -\partial_x^2 + \xi^2x^2 + \frac{\nu^2 - 1/4}{x^2}.
    \end{array}
\end{align}
We do not specify in the notations the dependence on $\nu$, since it is fixed and no confusion arises. These operators are positive-definite by Hardy inequality \eqref{eqn: hardy inequality}, self-adjoint with compact resolvent, and from Sturm-Liouville theory, their eigenvalues are simple. Hence, the eigenvalues form an increasing sequence of real numbers, 
\begin{align}
  0 \leq \lambda_{\xi,0} < \lambda_{\xi,1} < ... < \lambda_{\xi,k} < ...
\end{align}

The eigenfunctions of the operator $G_\xi$ defined by \eqref{eqn: operator shrodinger singular} will be denoted by $g_{\xi,k}$, and shown to be expressed in terms of a certain class of special functions that we now introduce. \\

The confluent hypergeometric functions are solutions of a class of linear second-order ODEs. Among them, the Kummer and Tricomi functions are solutions of the Kummer equation, for $a,b,z \in \mathbb{C}$, 
\begin{align}\label{eqn: Kummer equation}
    z w''(z) + \left( b - z \right) w'(z) - a w(z) = 0.
\end{align}
The Whittaker functions are solutions of the Whittaker equation, for $\kappa,\mu,z \in \mathbb{C}$,
\begin{align}\label{eqn: whittaker equation}
    W''(z) + \left( -\frac{1}{4} + \frac{\kappa}{z} - \frac{\mu^2 - 1/4}{z^2}\right)W(z) = 0.
\end{align}
These functions are studied for instance in \cite{buchholz2013confluent,OlverHandbook2010,bateman1953higher,tricomi1947sulle, slater1960confluent} and references therein, among others. \\

On one hand, the Kummer functions $M(a,b,z)$, solutions of \eqref{eqn: Kummer equation}, are defined, as long as $b$ is not a negative integer, by
\begin{align}\label{eqn: def Kummer function}
    M (a,b,z) &= \sum_{k \geq 0} \frac{(a)_k}{(b)_k k!}z^k, 
\end{align}
where $(a)_k = a(a+1)...(a+k-1)$, $(a)_0 = 1$, is the Pochhammer symbol. \\

On the other hand, the Whittaker functions, solutions of \eqref{eqn: whittaker equation}, are defined, as long as $2\mu$ is not a nonpositive integer, by
\begin{align}\label{eqn: definition whittaker function}
    M_{\kappa,\mu}(z) = e^{-\frac{z}{2}}z^{\frac{1}{2} + \mu}M\left(\frac{1}{2} + \mu - \kappa, 1 + 2\mu, z \right).
\end{align}

\subsubsection{A link between our objects of interest}

We have the following first result that links the eigenfunctions and eigenvalues of $G_\xi$ to the Kummer functions and their zeros with respect to the parameter $a$. 

\begin{theorem}\label{thm: exact form eigenfunctions singular + cond eigenv}
    Let $\nu \geq 0$. The eigenfunctions $g_{\xi,k}$ of $G_\xi $, $k \geq 0$, associated to $\lambda_{\xi,k}$, are  
    \begin{align}
    g_{\xi,k}(x) = A e^{-\xi x^2/2}x^{\frac{1}{2} + \nu}M \left(-\frac{\lambda_{\xi,k} - 2\xi (1 + \nu)}{4\xi }, 1 + \nu, \xi x^2 \right), \quad A \in \mathbb{R},
    \end{align}
where 
\begin{align}
    M \left(-\frac{\lambda_{\xi,k} - 2\xi (1 + \nu)}{4\xi },1 + \nu, \xi \right) = 0.
\end{align}
\end{theorem}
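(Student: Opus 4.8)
The plan is to reduce the eigenvalue equation $G_\xi g=\lambda g$ to the Whittaker equation \eqref{eqn: whittaker equation} (equivalently, to the Kummer equation \eqref{eqn: Kummer equation}) by an explicit change of variables, and then to read off the formula by selecting the solution compatible with the domain of the Friedrich extension.

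\emph{Step 1: reduction to a confluent hypergeometric equation.} Fix an eigenpair $(\lambda,g)=(\lambda_{\xi,k},g_{\xi,k})$. By elliptic regularity $g$ is smooth on $(0,1)$ and solves the classical ODE
\[
-g''(x)+\Big(\xi^2x^2+\tfrac{\nu^2-1/4}{x^2}\Big)g(x)=\lambda g(x),\qquad x\in(0,1).
\]
Set $z=\xi x^2$, so that by the chain rule $\partial_x^2=4\xi z\,\partial_z^2+2\xi\,\partial_z$ and $\xi^2x^2=\xi z$, $x^{-2}=\xi/z$. A direct computation then shows that $g(x)=(\xi x^2)^{-1/4}W(\xi x^2)$ solves the ODE if and only if $W$ solves the Whittaker equation \eqref{eqn: whittaker equation} with $\kappa=\lambda/(4\xi)$ and $\mu=\nu/2$; equivalently, $g(x)=e^{-\xi x^2/2}x^{1/2+\nu}v(\xi x^2)$ solves it if and only if $v$ solves the Kummer equation \eqref{eqn: Kummer equation} with $b=1+\nu$ and $a=-\big(\lambda-2\xi(1+\nu)\big)/(4\xi)$. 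Note $b=1+\nu\ge 1$ is never a nonpositive integer, so $M(a,b,\cdot)$ from \eqref{eqn: def Kummer function} and the relation \eqref{eqn: definition whittaker function} are available here.

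\emph{Step 2: selecting the solution at $x=0$.} The solution space of the ODE near $x=0$ is spanned by two solutions with leading behaviours $x^{1/2+\nu}$ and $x^{1/2-\nu}$ when $\nu>0$ (respectively $x^{1/2}$ and $x^{1/2}\log(1/x)$ when $\nu=0$); the first corresponds, via \eqref{eqn: definition whittaker function}, to $M_{\kappa,\mu}$, i.e.\ to the Kummer function $M(a,1+\nu,\cdot)$, while the second corresponds to the other (Tricomi, or Whittaker-$W$) solution. One checks that the second solution never lies in the form domain $H^1_{0,\nu}(0,1)$: for $\nu\ge 1$ it is not even in $L^2$ near $0$ (limit point case), while for $0\le\nu<1$ its $\|\cdot\|_\nu$-norm diverges (the derivative term is not integrable at $0$, since $\int_0^1 x^{-1-2\nu}\,dx=+\infty$ for $\nu>0$ and $\int_0^1 x^{-1}\log^2(1/x)\,dx=+\infty$ for $\nu=0$) or it fails to vanish at $0$. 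Since $g\in H^1_{0,\nu}(0,1)$, it must be a multiple of the regular branch, which gives
\[
g(x)=A\,e^{-\xi x^2/2}x^{1/2+\nu}\,M\!\left(-\frac{\lambda-2\xi(1+\nu)}{4\xi},\,1+\nu,\,\xi x^2\right),\qquad A\in\mathbb{R}.
\]

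\emph{Step 3: the boundary condition at $x=1$, and the converse.} Near $x=1$ the weight in $\|\cdot\|_\nu$ is smooth and bounded, so $H^1_{0,\nu}(0,1)$ coincides there with $H^1_0$ and every element vanishes at $x=1$; hence $g(1)=0$. As $e^{-\xi/2}\cdot 1^{1/2+\nu}\neq 0$, this is exactly $M\big(-(\lambda-2\xi(1+\nu))/(4\xi),\,1+\nu,\,\xi\big)=0$, the claimed eigenvalue condition. Conversely, if $\lambda$ satisfies this identity then the $g$ defined by the displayed formula is smooth on $(0,1)$, has the admissible behaviour $x^{1/2+\nu}$ at $0$ and satisfies $g(1)=0$ with $g$ smooth up to $x=1$, so $g\in H^1_{0,\nu}(0,1)$; moreover $G_\xi g=\lambda g\in L^2(0,1)$ by construction, hence $g\in D(G_\xi)$ and $\lambda$ is an eigenvalue with eigenfunction $g$. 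Since the eigenvalues of $G_\xi$ are simple, the increasing sequence $(\lambda_{\xi,k})_k$ is precisely the set of solutions $\lambda$ of this transcendental equation, with eigenfunctions as stated. The delicate point is Step~2: pinning down the Friedrich boundary condition at the singular endpoint $x=0$, especially in the critical case $\nu=0$, where $H^1_{0,\nu}(0,1)\supsetneq H^1_0(0,1)$ and the competing solution is logarithmic, so one must verify that this logarithmic solution still fails to belong to $H^1_{0,\nu}(0,1)$ (its $\|\cdot\|_\nu$-norm diverges) and hence that the regular branch is selected uniformly in $\nu\ge 0$. The rest is a routine change of variables and bookkeeping of confluent-hypergeometric identities.
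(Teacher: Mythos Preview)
Your proposal is correct and follows essentially the same strategy as the paper: reduce the eigenvalue ODE to the Kummer equation by the change of variables $z=\xi x^2$ together with the factor $e^{-\xi x^2/2}x^{1/2+\nu}$ (the paper does this in Proposition~\ref{prop: form eignfunction} via successive substitutions, you do it in one stroke and also note the equivalent Whittaker form), then discard the second fundamental solution by showing it lies outside $H^1_{0,\nu}(0,1)$, and finally read off the eigenvalue condition from the Dirichlet condition at $x=1$. The paper carries out Step~2 by writing out the second solution $\mathbf{M}(a,b,\cdot)$ explicitly in the four cases \eqref{eqn: second fundamental solution Kummer 1}--\eqref{eqn: second fundamental solution Kummer 4} and checking non-membership in $H^1_{0,\nu}$ case by case, whereas you argue directly from the leading behaviours $x^{1/2-\nu}$ (or $x^{1/2}\log x$ for $\nu=0$); both routes reach the same conclusion.

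One small caution on your Step~2: for $0<\nu<1/2$ (and for $\nu=0$) the potential coefficient $\nu^2-1/4$ is negative, so the divergence of $\int_0^1(\varphi_2')^2$ alone does not immediately give divergence of $\|\varphi_2\|_\nu$, since the two singular contributions can partially cancel at leading order. The cleaner justification for $\nu>0$ is the one the paper uses implicitly: by Hardy's inequality $H^1_{0,\nu}(0,1)=H^1_0(0,1)$, and $\varphi_2\notin H^1(0,1)$ because its derivative is not square-integrable near $0$. For $\nu=0$ both your argument and the paper's are a bit informal; the honest statement is that the formal integral $\int_0^1\big((\varphi_2')^2-\tfrac{1}{4x^2}\varphi_2^2\big)\,dx$ fails to be a finite nonnegative number, which is incompatible with membership in the Friedrich form domain.
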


We prove Theorem \ref{thm: exact form eigenfunctions singular + cond eigenv} in Section \ref{section: link Kummer eigenfunction}.\\

In particular, what Theorem \ref{thm: exact form eigenfunctions singular + cond eigenv} states, is that $\lambda$ is an eigenvalue of $G_\xi$ if and only if $a$ defined by the identity
\begin{align}
    a = - \frac{\lambda - 2\xi(1+\nu)}{4\xi}, 
\end{align}
solves 
\begin{align}\label{eqn: main equation M = 0}
   M(a,b,\xi) = 0,
\end{align}
with $b = 1 + \nu$ and $\xi \in (0,+\infty)$. The following theorem is proved at the end of Section \ref{section: link Kummer eigenfunction}, and directly follows from Theorem \ref{thm: exact form eigenfunctions singular + cond eigenv}.

\begin{theorem}\label{thm: link zeros Kummer to eigenvalues}
    For every $b \geq 1$, that we write $b = 1 + \nu$ with $\nu \geq 0$, for every $\xi > 0$, $a_{\xi,k}$ is a solution of the equation $M(a,b,\xi) = 0$ if and only if 
    \begin{align}\label{eqn: identity a zeros eigenvalue}
        a_{\xi,k} = - \frac{\lambda_{\xi,k} - 2\xi(1+\nu)}{4\xi},
    \end{align}
    where $\lambda_{\xi,k}$ is an eigenvalue of the operator $(G_\xi,D(G_\xi))$. 
\end{theorem}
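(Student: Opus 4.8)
The plan is to deduce this statement directly from Theorem \ref{thm: exact form eigenfunctions singular + cond eigenv}, which has already done essentially all the analytic work. First I would observe that for $b \geq 1$ we may write $b = 1 + \nu$ with $\nu = b - 1 \geq 0$, which is exactly the regime covered by Theorem \ref{thm: exact form eigenfunctions singular + cond eigenv}. The content to be proved is then a pure equivalence of two scalar equations, once the change of variables $a = -(\lambda - 2\xi(1+\nu))/(4\xi)$ is in place; note this map $\lambda \mapsto a$ is an affine bijection of $\mathbb{R}$, so it is invertible with $\lambda = 2\xi(1+\nu) - 4\xi a$.

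For the forward direction, suppose $a_{\xi,k}$ satisfies $M(a,b,\xi) = 0$. Define $\lambda := 2\xi(1+\nu) - 4\xi a_{\xi,k}$, so that $a_{\xi,k} = -(\lambda - 2\xi(1+\nu))/(4\xi)$ by construction. I would then set
\begin{align*}
    g(x) := e^{-\xi x^2/2} x^{\frac{1}{2}+\nu} M\!\left(-\frac{\lambda - 2\xi(1+\nu)}{4\xi}, 1+\nu, \xi x^2\right),
\end{align*}
and argue that $g$ is a nonzero element of $D(G_\xi)$ solving $G_\xi g = \lambda g$: the ODE identity $G_\xi g = \lambda g$ follows from the Kummer equation \eqref{eqn: Kummer equation} after the substitution $z = \xi x^2$ (this is precisely the computation underlying Theorem \ref{thm: exact form eigenfunctions singular + cond eigenv}), the behaviour $x^{1/2+\nu}$ near $0$ places $g$ in $H^1_{0,\nu}(0,1)$, and the hypothesis $M(a_{\xi,k},b,\xi) = 0$ guarantees the Dirichlet condition $g(1) = 0$. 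Hence $\lambda$ is an eigenvalue of $G_\xi$, and since the eigenvalues are enumerated as $\lambda_{\xi,0} < \lambda_{\xi,1} < \dots$, it equals $\lambda_{\xi,k}$ for the appropriate index $k$, giving \eqref{eqn: identity a zeros eigenvalue}. For the converse, if $\lambda_{\xi,k}$ is an eigenvalue, Theorem \ref{thm: exact form eigenfunctions singular + cond eigenv} states directly that $M(-(\lambda_{\xi,k}-2\xi(1+\nu))/(4\xi), 1+\nu, \xi) = 0$, i.e. the number $a_{\xi,k}$ defined by \eqref{eqn: identity a zeros eigenvalue} is a zero of $a \mapsto M(a,b,\xi)$.

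There is no real obstacle here; the only point requiring a little care is bookkeeping on the indexing — matching the label $k$ on the Kummer side with the label $k$ on the spectral side. This should be addressed by noting that the affine map $\lambda \mapsto a$ is strictly decreasing, so it reverses the natural ordering; since both the eigenvalues $(\lambda_{\xi,k})_k$ and (by Sturm–Liouville oscillation theory, or by the bijection just exhibited) the $a$-zeros form strictly monotone sequences, the correspondence is order-reversing and the indexing is consistent once one fixes the convention that $a_{\xi,k}$ denotes the zeros listed in \emph{decreasing} order. Since the statement as written only asserts the equivalence "$a_{\xi,k}$ solves $M(a,b,\xi)=0$ iff \eqref{eqn: identity a zeros eigenvalue} holds" without committing to a matching of indices beyond the existence of \emph{some} eigenvalue $\lambda_{\xi,k}$, even this mild point can be handled in one sentence, and the theorem follows.
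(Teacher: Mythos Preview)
Your proposal is correct and follows essentially the same approach as the paper: both deduce the theorem directly from Theorem~\ref{thm: exact form eigenfunctions singular + cond eigenv} via the affine bijection $a = -(\lambda - 2\xi(1+\nu))/(4\xi)$, with the paper's version simply citing Proposition~\ref{prop: form eignfunction} and Theorem~\ref{thm: exact form eigenfunctions singular + cond eigenv} for both directions rather than unpacking the forward direction by explicitly constructing the eigenfunction as you do. Your additional remarks on the indexing are more careful than the paper's treatment (which works with a generic eigenvalue $E$ and does not discuss the matching of labels), but are not needed for the statement as written.
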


Therefore, equation \eqref{eqn: main equation M = 0} is of very much interest. Throughout this paper, the term $a$-zero will refer to a solution of \eqref{eqn: main equation M = 0} with respect to $a$, for fixed $b$ and $\xi$. Analogously, we shall also call a $z$-zero a solution of  $M(a,b,z) = 0$ with respect to $z$. There shall not be any confusion on the underlying function of interest, so we may use the term $\kappa$-zero for Whittaker functions. \\

From the definition of the Whittaker function \eqref{eqn: definition whittaker function}, it follows that $a_{\xi,k}$ is a solution of \eqref{eqn: main equation M = 0} if and only if 
\begin{align}\label{eqn: link between a zero kappa zero}
    \kappa_{\xi,k} = \frac{b}{2} - a_{\xi,k}
\end{align}
is a solution of
\begin{align}\label{eqn: equation kappa zero whittaker} 
     M_{\kappa,\mu}(\xi) = 0,
\end{align}
with $\mu = (b-1)/2$.\\

Thanks to Theorem \ref{thm: link zeros Kummer to eigenvalues}, the study of the $a$-zeros of Kummer functions, the $\kappa$-zeros of Whittaker functions, and the eigenvalues of $G_\xi$ are equivalent. One may therefore attempt to analyze Kummer and Whittaker functions directly in order to derive results for $G_\xi$ (as done in \cite{baur2025eigenvalues} for fixed $k$ and $\nu$ integer). However, the zeros with respect to the first parameter of this class of special functions are still not well understood, and the spectral asymptotic results for $G_\xi$ provided in Section \ref{section: spectral analysis} contribute to filling this gap. We refer the reader to Section \ref{section: zeros of kummer}, and in particular to Theorem \ref{thm: summarize a zeros Kummer}. \\

On the other hand, to the best of our knowledge, there is no clear description of the spectrum of $G_\xi$ in the limit $\xi \rightarrow + \infty$ in the literature when considering a bounded interval. Usually, such analysis are performed for a general potential that satisfies some sufficient regularity up to the boundary of the interval (see \textit{e.g.} \cite{voros1981spectre, simon1983semiclassical, helffer1984puits, Helffer1988-yw, allibert1998controle} among others). We also refer the reader to Section \ref{section: magnetic laplacian}, in which we explicit the link between our operators and the magnetic Dirichlet Laplacian, whose spectral analysis enjoys a vast literature. 

\subsubsection{Spectral analysis}\label{section: spectral analysis}

Regarding the spectrum of $G_\xi$ defined in \eqref{eqn: operator shrodinger singular}, we obtain the following results. 

\begin{theorem}\label{thm: lower bound eigenvalues}
     Let $\nu \geq 0$. For every $\xi > 0$ and every $k\geq 0$, we have
    \begin{align}\label{eqn: lower bound eigenvalues 1}
        \frac{\lambda_{\xi,k}}{\xi} > 4k + 2(1+\nu).
    \end{align}
    Moreover, there exists $c \in (0, \pi^2)$ and $\xi_0 > 0$, such that for every $\xi \geq \xi_0$, for every $k \geq 0$, 
    \begin{align}\label{eqn: lower bound eigenvalues 2}
        \lambda_{\xi,k} \geq ck^2.
    \end{align}
\end{theorem}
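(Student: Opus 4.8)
The plan is to prove the two bounds by rather different means. For the first bound \eqref{eqn: lower bound eigenvalues 1}, I would exploit the exact description of the spectrum furnished by Theorem \ref{thm: link zeros Kummer to eigenvalues}, which identifies $\lambda_{\xi,k}$ with the $a$-zeros of $M(\cdot, 1+\nu, \xi)$ via $a_{\xi,k} = -(\lambda_{\xi,k} - 2\xi(1+\nu))/(4\xi)$. Thus \eqref{eqn: lower bound eigenvalues 1} is equivalent to the assertion that all $a$-zeros satisfy $a_{\xi,k} < -k$, i.e. that the $k$-th root of $a \mapsto M(a,b,\xi)$ lies strictly to the left of $-k$. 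The natural way to see this is to compare with the case of the half-line: on $(0,+\infty)$ the operator $-\partial_x^2 + \xi^2 x^2 + (\nu^2-1/4)/x^2$ has eigenvalues exactly $4\xi k + 2\xi(1+\nu)$ with polynomial Kummer eigenfunctions ($a=-k$), as recalled in Proposition \ref{prop: eigv and eigenf of G on R+}. Restricting to the interval $(0,1)$ with a Dirichlet condition at $x=1$ raises every eigenvalue strictly (domain monotonicity of the Friedrichs extension / min-max, using that the half-line eigenfunctions do not vanish at $x=1$, so they are not admissible competitors), giving $\lambda_{\xi,k} > 4\xi k + 2\xi(1+\nu)$ after checking the ordering is preserved. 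Equivalently, and perhaps more cleanly, one argues directly via Sturm oscillation: $g_{\xi,k}$ has exactly $k$ zeros in $(0,1)$, $g_{\xi,k}(1)=0$, while the half-line eigenfunction for $a=-k$ has exactly $k$ zeros in $(0,+\infty)$ and is nonzero at $1$; Sturm comparison between the parameter values $a=-k$ and $a=a_{\xi,k}$ on $(0,1)$ then forces $a_{\xi,k}<-k$, hence \eqref{eqn: lower bound eigenvalues 1}.

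For the second bound \eqref{eqn: lower bound eigenvalues 2}, the idea is that for $\xi$ large the inverse-square term and the $\xi^2x^2$ term are both nonnegative, so $G_\xi \geq -\partial_x^2$ on the relevant form domain, and $-\partial_x^2$ on $(0,1)$ with the appropriate (Dirichlet-type) boundary conditions has eigenvalues $\sim k^2\pi^2$. One must be careful because the form domain $H^1_{0,\nu}(0,1)$ in the critical case $\nu=0$ is larger than $H^1_0(0,1)$, so $-\partial_x^2$ must be interpreted with the correct boundary condition at $0$; but by the Hardy inequality the quadratic form of $G_\xi$ still dominates a multiple of the Dirichlet-Laplacian form on a common core, which is enough for min-max. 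Concretely, for any test function $f$ in the form domain, $\langle G_\xi f, f\rangle \geq \|f'\|^2 + \xi^2\|xf\|^2 \geq (1-\theta)\|f'\|^2$ for suitable $\theta$ obtained by trading part of $\|f'\|^2$ against the potential terms; taking the min over $(k{+}1)$-dimensional subspaces and comparing with the known eigenvalues $k^2\pi^2$ of $-\partial_x^2$ yields $\lambda_{\xi,k}\geq c k^2$ with any $c<\pi^2$, uniformly in $k$, once $\xi\geq\xi_0$.

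The main obstacle is the care needed around the critical case $\nu=0$, where $H^1_0(0,1)\subsetneq H^1_{0,\nu}(0,1)$: one cannot blindly invoke the standard Dirichlet-Laplacian min-max on $H^1_0$, and must instead work with the genuine form domain and use the Hardy inequality \eqref{eqn: hardy inequality} to control the boundary behavior at $0$. A secondary technical point is justifying the strict domain-monotonicity inequality in \eqref{eqn: lower bound eigenvalues 1} with the right index matching (that the $k$-th interval eigenvalue exceeds the $k$-th half-line eigenvalue, not merely some eigenvalue); the Sturm-oscillation argument sidesteps this cleanly by counting zeros directly, and is the route I would favor in the write-up.
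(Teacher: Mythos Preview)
Your treatment of the first bound \eqref{eqn: lower bound eigenvalues 1} is essentially the paper's: the paper gives both a min-max / domain-monotonicity argument (your first route) and a Sturm-oscillation argument phrased through the counting function $p(a,b)=\lceil -a\rceil$ of $z$-zeros of $M(a,b,\cdot)$ (your second route). So that part is fine.

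The gap is in your argument for \eqref{eqn: lower bound eigenvalues 2}. You assert that ``the inverse-square term and the $\xi^2x^2$ term are both nonnegative'' and hence $G_\xi \geq -\partial_x^2$ in form sense. This is false for $0\leq \nu < 1/2$: the coefficient $\nu^2-1/4$ is strictly negative, and no value of $\xi$ changes that. Your fallback via Hardy gives at best
\[
\langle G_\xi f,f\rangle \;\geq\; \|f'\|^2 + (\nu^2-\tfrac14)\cdot 4\|f'\|^2 \;=\; 4\nu^2\,\|f'\|^2,
\]
so the multiple of the Dirichlet form you obtain is $4\nu^2$, which degenerates to $0$ at $\nu=0$ (Hardy is sharp there) and yields no $c>0$. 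The harmonic term $\xi^2x^2$ cannot rescue this: it is bounded on $(0,1)$ and contributes nothing to controlling the singular behavior near $x=0$. So for the critical case $\nu=0$, precisely the one you flag as delicate, your scheme produces no inequality of the form $\lambda_{\xi,k}\geq ck^2$ with $c>0$.

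The paper avoids this by comparing not to $-\partial_x^2$ but to the \emph{Bessel} operator $G_0=-\partial_x^2+(\nu^2-1/4)/x^2$ on the same form domain, whose eigenvalues are $(j_{\nu,k})^2\sim k^2\pi^2$. Since $\xi^2x^2$ is a bounded zeroth-order perturbation on $(0,1)$ with $\|\xi^2x^2\|_\infty=\xi^2$, one gets $|\lambda_{\xi,k}-(j_{\nu,k})^2|\leq \xi^2$, hence $\lambda_{\xi,k}\geq (j_{\nu,k})^2-\xi^2$. This is only useful for $k$ large relative to $\xi$; for $k\lesssim \xi$ one instead invokes \eqref{eqn: lower bound eigenvalues 1} to get $\lambda_{\xi,k}\geq 4\xi k\geq ck^2$. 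Matching the two ranges by choosing $c\in(0,\pi^2)$ appropriately gives the claim. The essential idea you are missing is to keep the singular term in the comparison operator rather than trying to absorb it into the Dirichlet form.
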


Theorem \ref{thm: lower bound eigenvalues} is proved in Section \ref{section: lower bound eigenvalues}. \\

For the first lower bound \eqref{eqn: lower bound eigenvalues 1} in Theorem \ref{thm: lower bound eigenvalues}, we provide two different proofs, respectively in Sections \ref{section: proof general first bound counting function} and \ref{section: proof general first bound minmax}. The first proof is based on the counting function of the $\xi$-zeros of the Kummer function. Namely, we actually prove a strict upper bound for the $a_{\xi,k}$ solutions of \eqref{eqn: main equation M = 0}, and use identity \eqref{eqn: identity a zeros eigenvalue} in Theorem \ref{thm: link zeros Kummer to eigenvalues}. Observe that the inequality is strict, although they are usually obtained in the large sense. The second proof uses a min-max argument. This second approach gives an inequality in the large sense, but using the idea of the first proof one recovers the strict inequality.\\

Theorem \ref{thm: upper bound eigenvalues lower range} below shows that the lower bound \eqref{eqn: lower bound eigenvalues 1} is optimal for a finite number, that depends on $\xi$, of eigenvalues. We loose the optimality when $k$ grows too large (we see this from \eqref{eqn: identity a zeros eigenvalue} combined with the asymptotic $a_{\xi,k} \sim -k^2\pi^2/4\xi$). Thus, we provide another lower bound \eqref{eqn: lower bound eigenvalues 2}, more optimal for large $k$, by using once again a min-max argument, but this time in a perturbative approach. The second lower bound \eqref{eqn: lower bound eigenvalues 2} is proved in Section \ref{section: proof general second bound}.\\  

We may already observe that the lower bound \eqref{eqn: lower bound eigenvalues 2} is better than \eqref{eqn: lower bound eigenvalues 1} as long as 
\begin{align*}
    k \geq \frac{4\xi}{c}\left(1 + \frac{b}{2k} \right).
\end{align*}

We now have the following exponentially precise upper bound for the eigenvalues of $G_\xi$ in the bottom of the spectrum.

\begin{theorem}\label{thm: upper bound eigenvalues lower range}
    Let $\tau \in (0,1)$. There exists $\xi_\tau > 0$, $C_1,C_2 > 0$, such that for every $\xi \geq \xi_\tau$, for every $k \leq \left\lfloor \frac{\tau \xi}{4} \right\rfloor$, we have
    \begin{align}
        \frac{\lambda_{\xi,k}}{\xi} \leq 4k + 2(1+\nu) + C_1 \,\xi e^{-C_2\xi }.
    \end{align}
\end{theorem}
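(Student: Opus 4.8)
The plan is to construct accurate quasimodes for $G_\xi$ from the known eigenfunctions of the "full half-line" operator, and to exploit the exponential decay of the Gaussian factor to show that these quasimodes are exponentially close (in the $L^2$ graph norm) to genuine eigenfunctions, which by the spectral theorem will pin down the eigenvalues to within $O(e^{-C_2\xi})$. Concretely, let $G_\xi^{\mathbb{R}^+}$ denote the operator $-\partial_x^2 + \xi^2 x^2 + (\nu^2-1/4)x^{-2}$ on $L^2(0,+\infty)$ (cf. the Proposition referenced in the excerpt); its $k$-th eigenvalue is exactly $\xi(4k+2(1+\nu))$, with eigenfunction $\phi_{\xi,k}(x) = e^{-\xi x^2/2} x^{1/2+\nu} L_k^{(\nu)}(\xi x^2)$ (a Laguerre polynomial, the polynomial case of $M$). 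First I would fix a smooth cutoff $\chi$ equal to $1$ on $[0,1-\delta]$ and supported in $[0,1]$, for a suitable small fixed $\delta>0$, and set $\psi_{\xi,k} = \chi\,\phi_{\xi,k}$, which lies in $D(G_\xi)$.

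Next I would estimate the quasimode error $\bigl\|(G_\xi - \xi(4k+2(1+\nu)))\psi_{\xi,k}\bigr\|_{L^2(0,1)}$. Since $\phi_{\xi,k}$ exactly solves the eigenequation on $(0,+\infty)$, this error is supported in $\{\chi\neq 1\}\subset[1-\delta,1]$ and is a combination of $\chi'\phi_{\xi,k}'$, $\chi''\phi_{\xi,k}$ terms. On that region $\phi_{\xi,k}(x)$ and $\phi_{\xi,k}'(x)$ are bounded by $P_k(\xi)\,e^{-\xi x^2/2}$ for a polynomial $P_k$ in $\xi$ (the polynomial being the Laguerre polynomial and its derivative evaluated at $\xi x^2$), hence by $P_k(\xi)\,e^{-\xi(1-\delta)^2/2}$. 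The restriction $k\le \lfloor \tau\xi/4\rfloor$ is precisely what lets me absorb the $k$-dependence: the degree of $P_k$ grows linearly in $k\le \tau\xi/4$, so $P_k(\xi)\le e^{o(\xi)}$ uniformly, and the Gaussian wins. This yields a bound $\le C_1' e^{-C_2'\xi}$ with constants depending only on $\tau$ (after choosing $\delta$ small and $\xi_\tau$ large). I would also check $\|\psi_{\xi,k}\|_{L^2}\ge c>0$, again using that $\|\phi_{\xi,k}\|_{L^2(0,+\infty)}$ is normalized-ish and the mass outside $[0,1-\delta]$ is exponentially small.

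Then the spectral-theorem step: a normalized quasimode with error $\eta$ guarantees $\mathrm{dist}\bigl(\xi(4k+2(1+\nu)),\mathrm{spec}(G_\xi)\bigr)\le \eta$. To turn this into a statement about $\lambda_{\xi,k}$ specifically (the $k$-th eigenvalue), I would run this for all $j=0,\dots,\lfloor\tau\xi/4\rfloor$ simultaneously: the quasimodes $\psi_{\xi,0},\dots,\psi_{\xi,m}$ are almost orthonormal (their Gram matrix is $\mathrm{Id}+O(e^{-C\xi})$, since the $\phi_{\xi,j}$ are exactly orthonormal on the half-line and the cutoff costs exponentially little), so by the standard quasimode-to-eigenvalue lemma (Lemma comparing to min-max values, e.g. via the spectral projector onto an $O(e^{-C\xi})$-neighbourhood being of rank $\ge m+1$) the first $m+1$ eigenvalues $\lambda_{\xi,0},\dots,\lambda_{\xi,m}$ each lie within $C_1e^{-C_2\xi}$ of the corresponding $\xi(4j+2(1+\nu))$. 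Combined with the strict lower bound $\lambda_{\xi,k}/\xi > 4k+2(1+\nu)$ from Theorem \ref{thm: lower bound eigenvalues}, this gives the claimed two-sided localization, and in particular the upper bound.

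The main obstacle I expect is controlling the $k$-dependence of the constants uniformly for $k$ up to $\lfloor\tau\xi/4\rfloor$: one must verify that the polynomial growth of the Laguerre factors (and of the almost-orthogonality errors in the Gram matrix) is genuinely dominated by $e^{-C_2\xi}$ for \emph{all} such $k$ at once, with a single pair $(C_1,C_2)$ depending only on $\tau$. This is where the hypothesis $\tau<1$ is essential — it keeps $\xi x^2 \le \tau\xi < \xi$ comfortably away from the turning point region near $x=1$ where the exponential smallness would degrade — and the bookkeeping of Laguerre polynomial bounds on $[1-\delta,1]$ uniformly in $k\lesssim \xi$ is the technical heart of the argument.
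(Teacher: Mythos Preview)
Your overall architecture---build quasimodes from the half-line Laguerre eigenfunctions, apply the spectral distance lemma, then identify the eigenvalue using the strict lower bound from Theorem~\ref{thm: lower bound eigenvalues}---is exactly the paper's strategy. The choice of a smooth cutoff rather than the paper's explicit boundary correction $\Phi_k(x)-x^{1/2+\nu}\xi^{-(1/4+\nu/2)}\Phi_k(\sqrt{\xi})$ is immaterial, and your Gram-matrix route for labelling eigenvalues is more elaborate than needed (the paper simply observes that $\tilde\lambda_{\xi,k}>\mu_k$ and $\tilde\lambda_{\xi,k+1}>\mu_{k+1}=\mu_k+4$ already force the match), but none of this is wrong.

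The genuine gap is the sentence ``the degree of $P_k$ grows linearly in $k\le\tau\xi/4$, so $P_k(\xi)\le e^{o(\xi)}$ uniformly, and the Gaussian wins.'' This is false. A degree-$k$ polynomial evaluated at $\xi$ with $k\sim\tau\xi/4$ is \emph{not} $e^{o(\xi)}$; already the leading term of $L_k^{(\nu)}(r)$ gives $|L_k^{(\nu)}(\xi)|\asymp \xi^k/k!\asymp(4e/\tau)^{\tau\xi/4}=e^{c_\tau\xi}$ with $c_\tau=(\tau/4)(1+\ln(4/\tau))$. One checks $c_\tau>1/2$ for $\tau$ close to $1$ (e.g.\ $c_{0.8}\approx0.52$), so the crude product bound $e^{-\xi(1-\delta)^2/2}\cdot e^{c_\tau\xi}$ actually blows up and your argument fails in that range. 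The product $e^{-\xi x^2/2}|L_k^{(\nu)}(\xi x^2)|$ \emph{is} exponentially small for all $\tau\in(0,1)$, but only because of cancellation inside the Laguerre sum; capturing this requires sharp asymptotics, not degree-counting.

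This is precisely the ``technical heart'' you flag at the end, and the paper spends most of Section~\ref{section: proof technical lemmas} on it. The tool is the Plancherel--Rotach formula (Lemma~\ref{lemma: plancherel rotach formula}, from Szeg\H{o}): with $r=\mu_k\cosh^2\theta$ one gets
\[
e^{-r/2}|L_k^{(\nu)}(r)|\;\asymp\; k^{\nu/2-1/4}r^{-\nu/2-1/4}\,e^{(k+(\nu+1)/2)(2\theta-\sinh 2\theta)},
\]
and $2\theta-\sinh 2\theta<0$ gives the needed exponential decay in $k\sim\xi$ (Lemma~\ref{lemma: upper bound eigenf of G low regime}). The companion lower bound on $\|\psi_{\xi,k}\|$ likewise needs the oscillatory Plancherel--Rotach formula in the classically allowed region (Lemma~\ref{lemma: lower bound norm eignf G on R}). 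So your outline is right, but the step you glossed over is where the entire difficulty lives; you should replace the $e^{o(\xi)}$ claim by an appeal to Plancherel--Rotach asymptotics (or an equivalent WKB estimate for Laguerre functions past the turning point).
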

 
The regime described in Theorem \ref{thm: upper bound eigenvalues lower range} is analogous to the usual low-energy regime in semiclassical analysis. What we actually compute is an upper bound for $|\lambda_{\xi,k}/\xi - 4k - 2(1+\nu)|$, which translates as stated in Theorem \ref{thm: upper bound eigenvalues lower range} due to the lower bound \eqref{eqn: lower bound eigenvalues 1} in Theorem \ref{thm: lower bound eigenvalues}. \\

Theorem \ref{thm: upper bound eigenvalues lower range} is proved in Section \ref{section: upper bound eigenvalues}. It is obtained through spectral theory of linear self-adjoint operators. We prove it by performing a WKB-type analysis of the eigenvalue problem associated with \eqref{eqn: operator shrodinger singular} in the semiclassical limit $\xi \rightarrow +\infty$. \\

More precisely, we construct quasi-modes $\phi_{\xi,k}$ (given in \eqref{eqn: def quasimodes}) for the operators under consideration, 
\begin{align*}
    G_\xi \phi_{\xi,k} \approx 4\xi k + 2\xi(1+\nu),
\end{align*}
with errors that are exponentially small in $L^2$-norm, and use a perturbative approach (see Lemma \ref{lemma: spectral distance}). The condition $\tau < 1$ guarantees that this error remains exponentially small. In the limiting case $\tau = 1$, our proof of Lemma \ref{lemma: upper bound eigenf of G low regime} given in Section \ref{section: proof technical lemmas} is no longer valid (we refer the reader in particular to \eqref{eqn: eigenf Plancherel-Rotach 2}). \\

In fact, upon a careful look at the computations (see in particular \eqref{eqn: application spectral distance to quasimode 2} in the proof of Theorem \ref{thm: upper bound eigenvalues lower range}, and \eqref{eqn: eigenf Plancherel-Rotach 2} in the proof of Lemma \ref{lemma: upper bound eigenf of G low regime}), we have that the constant $C_2$ gets closer to zero as $\tau$ is chosen closer to $1$. In the case of $k$ fixed, and in the limit $\xi \to +\infty$, precise asymptotics are obtained in \cite{baur2025eigenvalues,kachmar2025magnetic, fournais2024counting} for $\nu$ integer (see also Section \ref{section: magnetic laplacian}). In the case $k$ fixed, our strategy leads to the following explicit upper bound.

\begin{theorem}\label{thm: upper bound eigenvalues lower range fixed k}
    Let $k \in \mathbb{N}$. For every $\epsilon > 0$, there exists $\xi_0$ such that, for every $\xi \geq \xi_0$, we have 
    \begin{align}
        \frac{\lambda_{\xi,k}}{\xi} \leq 4k + 2(1+\nu) + (1+\epsilon)\sqrt{\frac{2}{\Gamma(1+\nu)}\frac{(1+\nu)_k}{k!}}\left( \frac{\xi}{\sqrt{6+2\nu}} +  \frac{4k+2(1+\nu)}{\sqrt{2(1+\nu)}} \right) \xi^{k + (1+\nu)/2} e^{-\xi/2},
    \end{align}
    where $\Gamma$ is the Gamma function, and $(a)_n$ is the Pochhammer symbol. 
\end{theorem}

We prove Theorem \ref{thm: upper bound eigenvalues lower range fixed k} in Section \ref{section: proof theorem upper bound fixed k}. However, the error term is less precise than in \cite{baur2025eigenvalues,kachmar2025magnetic} due to a weaker power in the exponential, but we are not restrained to $\nu$ integer. \\

The strategy described above may allow to handle, by adapting the computations in the proofs of Theorems \ref{thm: upper bound eigenvalues lower range} and \ref{thm: upper bound eigenvalues lower range fixed k}, a homogeneous Neumann boundary condition at $x = 1$, or Robin boundary condition $u'(1) = \gamma u(1)$, and lead to an analogous results as Theorem \ref{thm: upper bound eigenvalues lower range}, leaving the size of the regime for $k$ intact. The choice of the quasi-modes in \eqref{eqn: def quasimodes} has to be slightly modified, and we refer the reader to the associated footnotes for slight details. However, since $\lambda_{\xi,k}/\xi$ may become smaller than $4k + 2(1+\nu)$, the statement of the theorems will be changed to stating an exponentially small upper bound for $\operatorname{dist}(\lambda_{\xi,k}/\xi, 4k + 2(1+\nu))$. \\

We do not seek these improvement in the present work, and stick to the Dirichlet case. \\

Finally, the strategy of proof of Theorem \ref{thm: upper bound eigenvalues lower range} only allows for an upper bound, and not a lower bound like in \cite{baur2025eigenvalues, kachmar2025magnetic}, since we rely on Lemma \ref{lemma: spectral distance} from perturbation theory. To obtain a lower bound on the difference $(\lambda_{\xi,k}/\xi) - 4k - 2(1+\nu)$, one might have to combine our technical Lemmas (see Section \ref{section: proof theorem upper bound eigenvalues} and their proofs in Section \ref{section: proof technical lemmas}) with the variational approach of \cite{kachmar2025magnetic, fournais2024counting}.

\subsection{Zeros of Kummer and Whittaker functions with respect to the first parameter}\label{section: zeros of kummer}

Using the aforementioned link \eqref{eqn: identity a zeros eigenvalue} in Theorem \ref{thm: link zeros Kummer to eigenvalues} between the $a_{\xi,k}$, solutions of \eqref{eqn: main equation M = 0}, and the eigenvalues of the singular operator \eqref{eqn: operator shrodinger singular}, let us summarize in the following theorem what is obtained concerning the non-asymptotic $a$-zeros of equation \eqref{eqn: main equation M = 0}.
\begin{theorem}\label{thm: summarize a zeros Kummer}
    For every $b \geq 1$ and $\xi > 0$, the solutions of \eqref{eqn: main equation M = 0} form a strictly decreasing sequence of real numbers
    \begin{align}
        ... < a_{\xi,k} < a_{\xi,k-1} < ... < a_{\xi,1} < a_{\xi,0} < 0,
    \end{align}
    that satisfy
    \begin{align}\label{eqn: upper bound a zeros 1}
        a_{\xi,k} < -k, \quad \text{ for every } k \in \mathbb{N}.
    \end{align}
    Moreover, there exists $c \in (0, \pi^2)$ and $\xi_0 > 0$, such that for every $\xi \geq \xi_0$, for every $k \geq 0$, 
    \begin{align}\label{eqn: upper bound a zeros 2}
        a_{\xi,k} \leq -\frac{ck^2}{4\xi} + \frac{b}{2}.
    \end{align}
    Fix now $b \geq 1$. For every $\tau \in (0,1)$, there exists $\xi_\tau >0$, and $C_1,C_2 > 0$, such that for every $\xi \geq \xi_\tau$, for every $k \leq \lfloor\frac{\tau \xi}{4} \rfloor$,  
        \begin{align}
            - k - C_1e^{-C_2\xi} \leq a_{\xi,k} < -k.
        \end{align}
\end{theorem}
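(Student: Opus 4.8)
The plan is to deduce Theorem \ref{thm: summarize a zeros Kummer} entirely from the spectral results already established, via the dictionary of Theorem \ref{thm: link zeros Kummer to eigenvalues}. Recall that identity \eqref{eqn: identity a zeros eigenvalue} gives $a_{\xi,k} = -(\lambda_{\xi,k} - 2\xi(1+\nu))/(4\xi) = \tfrac{b}{2} - \tfrac{\lambda_{\xi,k}}{4\xi}$ with $b = 1+\nu$, and this is an affine, strictly decreasing function of $\lambda_{\xi,k}$. Hence every statement about the $a$-zeros is the image of a statement about the eigenvalues under the map $\lambda \mapsto \tfrac{b}{2} - \tfrac{\lambda}{4\xi}$.

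First I would justify that the $a$-zeros form a strictly decreasing sequence bounded above by $a_{\xi,0} < 0$. By Theorem \ref{thm: link zeros Kummer to eigenvalues}, the set of $a$-zeros is in bijection with the eigenvalue sequence $0 \le \lambda_{\xi,0} < \lambda_{\xi,1} < \cdots$, which is an increasing sequence tending to $+\infty$ (the operator $G_\xi$ has compact resolvent, and the eigenvalues are simple by Sturm-Liouville theory, as recalled after \eqref{eqn: operator shrodinger singular}). Applying the decreasing affine map yields $\cdots < a_{\xi,k} < a_{\xi,k-1} < \cdots < a_{\xi,1} < a_{\xi,0}$. For the sign, the lower bound \eqref{eqn: lower bound eigenvalues 1} with $k=0$ gives $\lambda_{\xi,0}/\xi > 2(1+\nu)$, hence $a_{\xi,0} = \tfrac{b}{2} - \tfrac{\lambda_{\xi,0}}{4\xi} < \tfrac{b}{2} - \tfrac{b}{2} = 0$, so all $a_{\xi,k} < 0$.

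Next I would transport each of the three eigenvalue estimates. The strict lower bound \eqref{eqn: lower bound eigenvalues 1}, $\lambda_{\xi,k}/\xi > 4k + 2(1+\nu)$, becomes $a_{\xi,k} = \tfrac{b}{2} - \tfrac{1}{4}\cdot\tfrac{\lambda_{\xi,k}}{\xi} < \tfrac{b}{2} - \tfrac{1}{4}(4k + 2b) = -k$, which is \eqref{eqn: upper bound a zeros 1}. The second lower bound \eqref{eqn: lower bound eigenvalues 2}, $\lambda_{\xi,k} \ge ck^2$ for $\xi \ge \xi_0$, gives directly $a_{\xi,k} = \tfrac{b}{2} - \tfrac{\lambda_{\xi,k}}{4\xi} \le \tfrac{b}{2} - \tfrac{ck^2}{4\xi}$, which is \eqref{eqn: upper bound a zeros 2}. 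Finally, the two-sided estimate in the bottom of the spectrum: Theorem \ref{thm: upper bound eigenvalues lower range} gives, for $\xi \ge \xi_\tau$ and $k \le \lfloor \tau\xi/4\rfloor$, the bound $\lambda_{\xi,k}/\xi \le 4k + 2(1+\nu) + C_1 e^{-C_2\xi}$; combined with \eqref{eqn: lower bound eigenvalues 1} this sandwiches $\lambda_{\xi,k}/\xi$, and applying $\lambda \mapsto \tfrac{b}{2} - \tfrac{\lambda}{4\xi}$ (which reverses inequalities and scales the exponential error by $1/4$, harmless up to renaming $C_1$) yields $-k - \tfrac{C_1}{4} e^{-C_2\xi} \le a_{\xi,k} < -k$, which is the claimed display after relabeling $C_1$.

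This proof is essentially a bookkeeping exercise, so there is no serious obstacle; the only point requiring a word of care is the \emph{completeness} of the bijection in Theorem \ref{thm: link zeros Kummer to eigenvalues}, i.e. that \eqref{eqn: main equation M = 0} has \emph{no} $a$-zeros other than those coming from eigenvalues of $G_\xi$ — this is exactly the "if and only if" already asserted there, and it is what guarantees that the $a$-zeros are genuinely indexed by $k \in \mathbb{N}$ with no extras, so the sequence is the full solution set and not merely a subsequence. Given that, the monotonicity and the three bounds all follow by applying the single affine substitution \eqref{eqn: identity a zeros eigenvalue} to Theorems \ref{thm: lower bound eigenvalues} and \ref{thm: upper bound eigenvalues lower range}.
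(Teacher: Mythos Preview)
Your proposal is correct and matches the paper's intended argument: Theorem \ref{thm: summarize a zeros Kummer} is presented in the paper explicitly as a summary obtained by transporting Theorems \ref{thm: lower bound eigenvalues} and \ref{thm: upper bound eigenvalues lower range} through the affine dictionary \eqref{eqn: identity a zeros eigenvalue} of Theorem \ref{thm: link zeros Kummer to eigenvalues}, and no separate proof is given. Your care about the ``if and only if'' in Theorem \ref{thm: link zeros Kummer to eigenvalues} ensuring the $a$-zeros are exactly indexed by $k\in\mathbb{N}$ is appropriate and is indeed what the paper relies on.
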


Using the identity \cite[Eq. (13.2.39)]{OlverHandbook2010}
\begin{align}\label{eqn: indentity positive negative roots}
    M(a,b,z) = e^zM(b-a,b,-z),
\end{align}
Theorem \ref{thm: summarize a zeros Kummer} extends to the case $\xi < 0$. \\

Moreover, using the definition \eqref{eqn: definition whittaker function} of Whittaker functions, from which is derived the identity \eqref{eqn: link between a zero kappa zero} between $a_{\xi,k}$ and $\kappa_{\xi,k}$, the above Theorem can be easily translated into a result for Whittaker functions. \\

Localizing the roots with respect to $z \in \mathbb{C}$ of the Kummer function \eqref{eqn: def Kummer function}, with $a,b \in \mathbb{C}$, or equivalently the roots of the Whittaker function \eqref{eqn: definition whittaker function} with $\kappa, \mu \in \mathbb{C}$, has attracted much more attention in the literature than finding the $a$-zeros, or $\kappa$-zeros (see e.g. \cite{ahmed1982properties, slater1960confluent, bateman1953higher, buchholz2013confluent,OlverHandbook2010}, or more recently \cite{boussaada2022some}). \\

To the best of our knowledge, there are no clear results in the literature concerning the solutions of \eqref{eqn: main equation M = 0} with respect to the parameter $a$, and the results presented in Section \ref{section: zeros of kummer} are new. It is known in the literature, for instance from \cite[Section 13.9]{OlverHandbook2010} and \cite[Section 17.2 page 185]{buchholz2013confluent}, but without a proof, or without a reference in the second case, that the solutions with respect to $a$, for $b,z \in \mathbb{C}$ fixed, of 
\begin{align}\label{eqn: main equation M=0 complex argument}
    M(a,b,z)  = 0,
\end{align}
asymptotically behave like $- k^2 \pi^2/4z$. Observe that this behavior is expected when the argument is real from the link \eqref{eqn: identity a zeros eigenvalue} between the $a_{\xi,k}$ and the eigenvalues of the singular operators \eqref{eqn: operator shrodinger singular}. But this is insufficient for our purpose as it does not cover the whole set of solutions, and these estimates lack uniformity with respect to the argument. Also, we will observe that the non-asymptotic solutions of \eqref{eqn: main equation M = 0} behave quite differently than this. \\

Again, when $b,z \in \mathbb{C}$ are fixed, by means of a Newton method, in \cite[Section 6.4, page 110]{slater1960confluent} the solutions with respect to $a$ of \eqref{eqn: main equation M=0 complex argument} are approximated. But these approximations are much less precise than the ones we will obtain. \\

More than being a natural question to try to understand the $a$-zeros of \eqref{eqn: main equation M = 0}, the latters have their importance as they are linked to the eigenvalues of a large class of second-order linear operators as shown by the identity \eqref{eqn: identity a zeros eigenvalue}. Indeed, Kummer and Whittaker functions often appear explicitly in the expression of the eigenfunctions of some differential operators (see e.g. \cite[Section 18]{buchholz2013confluent}, Theorem \ref{thm: exact form eigenfunctions singular + cond eigenv}, and Section \ref{section: magnetic laplacian}). Usually, to have a complete description of the spectrum of these operators, they are considered on the whole half-line, so that for $M(a,b,\xi)$ to be square integrable, one has to take $a = -k$ for $M$ to be polynomial (see Proposition \ref{prop: eigv and eigenf of G on R+}). From \eqref{eqn: identity a zeros eigenvalue}, this trivially forces the eigenvalues to be explicitly dependent on the positive integers. When one considers these eigenproblems on finite intervals, the eigenvalues are usually approximated numerically.

\subsection{The link with the magnetic Dirichlet Laplacian on a disk}\label{section: magnetic laplacian}

The operators $G_\xi$, introduced in \eqref{eqn: operator shrodinger singular}, arise from the magnetic Laplacian on a disk with Dirichlet boundary conditions, in the presence of both a
constant magnetic field and an Aharonov–Bohm flux line through the origin (see \textit{e.g.} \cite{exner2002generalized,helffer2025flux,helffer2025magnetic}). \\

Introduce formally the operator 
\begin{align}\label{eqn: dirichlet laplacian}
    \left(-i\nabla + A \right)^2, \quad A = \frac{B}{2} 
\begin{pmatrix}
 - y \\
x
\end{pmatrix}
+ \frac{\alpha}{x^2 + y^2}
\begin{pmatrix}
 - y \\
x
\end{pmatrix}
, 
\end{align}

on the disk $D(0,R) \subset \mathbb{R}^2$ of radius $R > 0$, where $B > 0$ is the constant magnetic field strength, and $\alpha$ is the
Aharonov–Bohm flux. We consider here the Friedrichs extension of \eqref{eqn: dirichlet laplacian} with domain $C_c^\infty(D(0,R))$. \\

The eigenvalue problem associated with this operator writes in polar coordinates, for $(r,\theta) \in (0,R) \times [0,2\pi)$,
\begin{align}
    \left[ \displaystyle -\partial_r^2 - \frac{1}{r}\partial_r - \frac{1}{r^2}\partial_\theta^2 - \frac{2i}{r}\left( \frac{Br}{2} + \frac{\alpha}{r}\right) \partial_\theta + \frac{B^2}{4}r^2 + B \alpha + \frac{\alpha^2}{r^2} \right] \varphi(r,\theta) = \lambda \varphi(r,\theta),
\end{align}

with boundary conditions given by $\varphi(R,\theta) = 0$, and we have $H_0^1 \left( D(0,R)\right)$-regularity. The eigenfunctions are of the form 
\begin{align*}
    \varphi(r,\theta) = \psi(r)e^{i\ell \theta}, \quad \ell \in \mathbb{Z}, \quad r \in (0,R), \quad \theta \in [0,2\pi),
\end{align*}

with $\psi$ satisfying
\begin{align}
    \left[ \displaystyle -\partial_r^2 - \frac{1}{r}\partial_r + \frac{(\ell + \alpha)^2}{r^2} + \frac{B^2}{4}r^2 + B ( \ell + \alpha ) \right] \psi(r) = \lambda \psi(r),
\end{align}
with $\psi(R) = 0$, and $H^1$-regularity at $r = 0$.\\

Now, setting $\psi(r) = \Psi(r)/\sqrt{r}$, the function $\Psi$ must satisfy
\begin{align*}
    \left[ - \partial_r^2 + \left( \frac{B}{2} \right)^2r^2 + \frac{(\ell+\alpha)^2-1/4}{r^2} \right] \Psi(r) = (\lambda - B(\ell + \alpha))\Psi(r), 
\end{align*}
with $\Psi(R) = \Psi(0) = 0$.\\

Observe that after the change of variable $x = r/R$, by setting $B = 2\xi/R^2$, we retrieve exactly the eigenvalue problem associated with our operators $G_\xi$ introduced in \eqref{eqn: operator shrodinger singular}, with 
\begin{align}
    \nu = |\ell + \alpha|,
\end{align}
so that our results described the spectrum of \eqref{eqn: dirichlet laplacian} in the strong magnetic field limit $B \rightarrow + \infty$. \\

Our parameter $\nu$ therefore encodes both the angular momentum $\ell$ and the
Aharonov–Bohm flux $\alpha$. When $\alpha \in (0,1)$, $\nu$ is non-integer, and reflects the presence of the singular flux line. This is the Aharonov–Bohm effect. \\

The spectrum of \eqref{eqn: dirichlet laplacian} has been intensively studied in the literature. However our results are, to the best of our knowledge, more precise than what is currently available, owing to both the precision and the uniformity of the estimates we provide. \\

We refer for instance the reader to \cite{montgomery1995hearing, helffer2017semi, frank2009polya, baur2025eigenvalues, helffer2025flux, helffer2025flux, kachmar2025magnetic, helffer2017semiclassical} and reference therein. \\

For instance, in \cite{helffer2025flux,helffer2025magnetic}, the study of the problem described above is performed on the exterior of a disk, respectively in the strong and weak field limit. In particular, in \cite{helffer2025flux}, a three-terms expansion for the lowest eigenvalue is given. We also refer the reader to \cite{kachmar2025laplace} for Neumann boundary conditions in this geometric setting in the weak field limit. On non simply-connected domains, we refer the reader to, for instance, \cite{helffer2017semiclassical}. \\

In \cite[Theorem 2.1]{baur2025eigenvalues}, they obtain similar results (translated in our context) as ours for $\lambda_{\xi,k}$ in the limit $\xi \rightarrow +\infty$, but with $k$ fixed and $\nu$ integer (\textit{i.e.} in the case of only a constant magnetic field), and with a different approach from ours. In \cite{kachmar2025magnetic}, similar results as in \cite{baur2025eigenvalues} are obtained for Neumann boundary conditions, via quasi-modes constructions and variational methods, which also apply to Dirichlet and Robin boundary conditions. Recall that, as stated at the end of Section \ref{section: spectral analysis}, our method to prove Theorem \ref{thm: upper bound eigenvalues lower range} can be adapted to these boundary conditions.

\subsection{Additional comments on the results}

We have identified, for any fixed $\nu \geq 0$, the low-lying eigenvalues, indexed by $k \leq \lfloor \tau \xi/4 \rfloor$ for some fixed $\tau \in (0,1)$, of $G_\xi$ in the limit $\xi \rightarrow + \infty$, and therefore the corresponding non-asymptotic solutions with respect to $a$ of \eqref{eqn: main equation M = 0} thanks to the identity \eqref{eqn: identity a zeros eigenvalue} in Theorem \ref{thm: exact form eigenfunctions singular + cond eigenv}. \\

One may wonder what to expect concerning the regime $k \geq \lfloor \tau \xi/4 \rfloor + 1$. In this regime, one should consider separately the case  $\lfloor \tau \xi/4 \rfloor + 1 \leq k \leq \lfloor \xi/4\tau \rfloor$, which is the intermediate regime, and $k \geq \lfloor \xi/4\tau \rfloor + 1$, which is the upper regime. These regimes correspond to the usual intervals of energy in the semiclassical framework. In the first case, the analysis should be performed in connection with the Airy equation, while in the second case in connection with the Bessel equation.\\

One may try to obtain results by means of a WKB analysis, for instance in the spirit of \cite{allibert1998controle} among others. But we do not expect a WKB analysis to be easy, due to the singular term $(\nu^2 - 1/4)/x^2$. Another possibility is to take advantage of Theorem \ref{thm: exact form eigenfunctions singular + cond eigenv}, in particular identity \eqref{eqn: identity a zeros eigenvalue}, and look for uniform asymptotic expansions of Kummer and Whittaker functions as explained below.\\

In the case of use of asymptotic expansions, one should look for uniform, with respect to the argument and the first parameter, asymptotic expansion of Kummer or Whittaker functions. Let us formally explain how this approach would work. We drop the $x$ variable by fixing $x=1$, and look at the semiclassical parameter $\xi$ as the new argument. That is, we try to solve \eqref{eqn: main equation M = 0} or \eqref{eqn: equation kappa zero whittaker} with respect to the first parameter by looking at $\xi$ as a real argument in its own right. Then, we use the expansions obtained in \cite[Section 10]{erdelyi1957asymptotic} for instance. The expansions obtained in \cite{erdelyi1957asymptotic} are for the Whittaker functions, but by their definition \eqref{eqn: definition whittaker function}, and Theorem \ref{thm: exact form eigenfunctions singular + cond eigenv}, they are linked to $G_\xi$. \\

The advantage of the expansions in \cite{erdelyi1957asymptotic} is that the first parameter $\kappa$ of the Whittaker function \eqref{eqn: definition whittaker function} is moved into the argument of the main term of the expansion, and thanks to a Liouville-Green transformation, in some sense the problem is reduced to investigate the zeros with respect to the argument of Airy or Bessel functions, from which localization results for the $\kappa$-zeros may be derived.\\

The expansion \cite[(10.1) page 31]{erdelyi1957asymptotic} in terms of Bessel functions should give a result concerning the solutions of \eqref{eqn: equation kappa zero whittaker} with respect to $\kappa$, uniformly with respect to $\xi$, and therefore a result concerning the eigenvalues of $G_\xi$. To do so, one should prove that these solutions are located near the zeros of the main term in the expansion using Rouché's theorem. This would lead to an implicit description of the spectrum and $a$-zeros in this regime, but with also a gap property. \\

However, we do not expect the expansion \cite[(10.3) page 31]{erdelyi1957asymptotic} in terms of Airy functions to work. Indeed, the expansion does not extend to the complex plane, so Rouché's theorem cannot be used. Moreover, an analysis of the polar decomposition of Airy functions shows that the reminder term is too big in the limit $\xi \rightarrow + \infty$, making it hard to show that the zeros of the main term of the expansion approximate correctly the $\kappa$-zeros. Moreover, one needs to make sure that no zeros have been forgotten in this regime, which is an additional difficulty. Observe that this is not a problem in the upper regime thanks to Rouché's theorem. \\

We stress that asymptotic expansions that are also uniform with respect to the second parameter are given in \cite{dunster89}, and are completely analogous to those obtained in \cite{erdelyi1957asymptotic}. However, these are much harder to manipulate, as the change of variables by Liouville-Green transformations are implicit. \\

Finally, one may replace in $G_\xi$ defined by \eqref{eqn: operator shrodinger singular} the harmonic potential $\xi^2x^2$ by $\xi^2q(x)^2$, for a regular function $q$ that satisfies $q(x) \sim q'(0)x$ near zero. Then, using perturbation theory for self-adjoint operators (\textit{e.g.} Lemma \ref{lemma: spectral distance} or \cite[Proposition 4.1]{darde2023null}), one may obtain a result concerning the low-lying eigenvalues of the newly obtained generalized operator. We also refer the reader to \cite[Section 3]{helffer2025quantum} where this study is performed on the plane, with $\nu \in [0,1/2)$. \\

However, we stick in the present paper to the low-lying eigenvalues of our toy model $G_\xi$, and do not seek the improvement presented above. 

\subsection{Structure of the paper}

Let us summarize the general structure of the paper. In Section \ref{section: link Kummer eigenfunction}, we prove Theorem \ref{thm: exact form eigenfunctions singular + cond eigenv} and Theorem \ref{thm: link zeros Kummer to eigenvalues}. Namely, the one-to-one correspondence between the eigenvalues of $G_\xi$ and the $a_{\xi,k}$ solutions of \eqref{eqn: main equation M = 0}. Then, in Section \ref{section: lower bound eigenvalues}, we prove Theorem \ref{thm: upper bound eigenvalues lower range}. The two proofs of the first bound \eqref{eqn: lower bound eigenvalues 1} in Theorem \ref{thm: upper bound eigenvalues lower range} are given in Sections \ref{section: proof general first bound counting function} and \ref{section: proof general first bound minmax}, while the second bound \eqref{eqn: lower bound eigenvalues 2} is proved in Section \ref{section: proof general second bound}. Finally, Section \ref{section: upper bound eigenvalues} is devoted to the proof of Theorem \ref{thm: upper bound eigenvalues lower range}. We prove Theorem \ref{thm: upper bound eigenvalues lower range} in Section \ref{section: proof theorem upper bound eigenvalues} using some technical lemmas that are proved in Section \ref{section: proof technical lemmas}.

\section{Kummer functions as eigenfunctions of a class of Schrodinger operators: proof of Theorem \ref{thm: exact form eigenfunctions singular + cond eigenv}}\label{section: link Kummer eigenfunction}

The main purpose of this section is to link the Kummer functions to the eigenfunctions of $G_\xi$ introduced in \eqref{eqn: operator shrodinger singular}. We prove Theorem \ref{thm: exact form eigenfunctions singular + cond eigenv}.\\ 

We first show that the solutions of $G_\xi f = Ef$ write in terms of solutions of the Kummer equation \eqref{eqn: Kummer equation}. Before giving the Proposition, let us remind the couple of linearly independent solutions of the Kummer equation \eqref{eqn: Kummer equation} that we might encounter. The first solution will always be given, in our cases, by the Kummer function introduced in \eqref{eqn: def Kummer function}. We denote the second fundamental solution of \eqref{eqn: Kummer equation} of interest to us, for $b \in \mathbb{R}$ and $a,z \in \mathbb{C}$, by $\mathbf{M}(a,b,z)$. Its definition depends on wether or not the parameter belongs to some subsets of $\mathbb{R}$. Following for instance \cite[table p. 9]{mathews2021physicist}, or \cite[Section 13.2]{OlverHandbook2010}, we have the following. \\

\begin{enumerate}[label = (\roman*)]
    \item If $b \notin \mathbb{Z}$, $\mathbf{M}(a,b,z)$ is defined by
\begin{align}\label{eqn: second fundamental solution Kummer 1}    
     z^{1-b}M (a+1-b, 2-b, z).
\end{align}
    \item If $b = 1 + n$, $n \in \mathbb{N}$, and $n-a \notin \mathbb{N}$, $\mathbf{M}(a,b,z)$ is defined by
    \begin{align}\label{eqn: second fundamental solution Kummer 2}
    \sum_{k=1}^{n} \frac{n! (k-1)!}{(\nu - k)! (1 - a)_k} z^{-k} - \sum_{k=0}^{\infty} \frac{(a)_k}{(n + 1)_k \, k!} z^k \left( \ln z + \psi(a + k) - \psi(1 + k) - \psi(n + k + 1) \right),
    \end{align}
    where $\psi$ is the digamma function $\Gamma'(z)/\Gamma(z)$, and $\Gamma$ is the Gamma  function defined by 
    \begin{align*}
    \Gamma(z) = \int_0^\infty e^{-t}t^{z-1} \ dt.
    \end{align*}
    \item If $b = 1 + n$, $n\in \mathbb{N}$, and $-a \in \mathbb{N}$, $\mathbf{M}(a,b,z)$ is defined by 
    \begin{align}\label{eqn: second fundamental solution Kummer 3}
    \sum_{k=1}^{n} \frac{n! (k-1)!}{(\nu - k)! (1 - a)_k} z^{-k} &- \sum_{k=0}^{-a} \frac{(a)_k}{(n + 1)_k \, k!} z^k \left( \ln z + \psi(a + k) - \psi(1 + k) - \psi(n + k + 1) \right) \notag \\
    &+ (-1)^{1-a}(-a)! \sum_{k = 1 - a}^\infty \frac{(k-1+a)!}{(n+1)_k k!}z^k,
    \end{align}
    where $\psi$ is the Digamma function. \\

    \item If $b = 1 + n$, $n\in \mathbb{N}$, and $a \in \mathbb{N}$, $\mathbf{M}(a,b,z)$ is defined by
    \begin{align}\label{eqn: second fundamental solution Kummer 4}
    \sum_{k=a}^n \frac{(k-1)!}{(n-k)!(k-a)!}z^{-k}.
    \end{align}
\end{enumerate}

\begin{proposition}\label{prop: form eignfunction}
     Let $E \geq 0$. The solutions of 
\begin{align}\label{eqn: general eigenequation}
    -f''(x) + \xi^2x^2f(x) + \frac{\nu^2 - 1/4}{x^2} f - E f = 0
\end{align}
are of the form 
\begin{align}\label{eqn : eigenf candidates}
      f_E(x) = e^{-\xi x^2/2}x^{\frac{1}{2} + \nu} \left( A M \left(a,b,\xi x^2 \right) + B \mathbf{M}\left(a,b, \xi x^2 \right) \right), 
\end{align}
where $M$ is the Kummer function defined in \eqref{eqn: def Kummer function}, $\mathbf{M}$ is defined by either \eqref{eqn: second fundamental solution Kummer 1}, \eqref{eqn: second fundamental solution Kummer 2}, \eqref{eqn: second fundamental solution Kummer 3} or \eqref{eqn: second fundamental solution Kummer 4}, the coefficients $A$ and $B$ are real, and the parameters are defined by
\begin{align}\label{eqn: parameter candidates}
    \begin{array}{ccl}
       a &=&  \displaystyle -\frac{E - 2\xi(1 + \nu)}{4\xi}, \\[8pt]
       b &=& 1 + \nu. 
    \end{array}
\end{align}
\end{proposition}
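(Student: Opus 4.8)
The plan is to reduce the singular eigenvalue equation \eqref{eqn: general eigenequation} to the Kummer equation \eqref{eqn: Kummer equation} by an explicit change of unknown and change of variable. First I would strip off the expected boundary and Gaussian behaviour by substituting
\begin{align*}
    f(x) = e^{-\xi x^2/2} x^{\frac{1}{2}+\nu} u(x),
\end{align*}
which is the natural ansatz suggested by the harmonic oscillator part $\xi^2 x^2$ (giving the factor $e^{-\xi x^2/2}$) and by the Frobenius analysis at the regular singular point $x=0$ of the term $(\nu^2-1/4)/x^2$ (whose indicial equation $r(r-1) = \nu^2 - 1/4$ has roots $r = 1/2 \pm \nu$, selecting $r = 1/2+\nu$ for integrability). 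Plugging this into \eqref{eqn: general eigenequation} and carefully computing $f''$ via the product rule, the terms $\xi^2 x^2 f$ and $(\nu^2-1/4)x^{-2} f$ cancel against the corresponding pieces coming from differentiating the prefactor, and one is left with a second-order linear ODE for $u$ with polynomial-in-$x$ and $1/x$ coefficients.

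Next I would perform the change of variable $z = \xi x^2$, so that $\partial_x = 2\xi x\,\partial_z$ and $\partial_x^2 = 4\xi^2 x^2 \partial_z^2 + 2\xi\,\partial_z = 4\xi z\,\partial_z^2 + 2\xi\,\partial_z$. Substituting and dividing through by the appropriate power of $\xi$, the equation for $u$ as a function of $z$ should become exactly the Kummer equation $z\,u'' + (b-z)u' - a\,u = 0$ with $b = 1+\nu$ and with $a$ determined by matching the constant (zeroth-order) coefficient; this forces $a = -(E - 2\xi(1+\nu))/(4\xi)$, which is \eqref{eqn: parameter candidates}. Once the ODE is identified as Kummer's equation, its general solution is the span of the two fundamental solutions $M(a,b,z)$ and $\mathbf{M}(a,b,z)$, where the second solution takes one of the four forms \eqref{eqn: second fundamental solution Kummer 1}--\eqref{eqn: second fundamental solution Kummer 4} according to the arithmetic nature of $b = 1+\nu$ (integer or not) and of $a$ (whether $-a$ or $a$ is a nonnegative integer) — this is a standard case distinction, e.g. from \cite[table p. 9]{mathews2021physicist} or \cite[Section 13.2]{OlverHandbook2010}. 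Undoing the two substitutions yields \eqref{eqn : eigenf candidates}.

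The only genuinely delicate point is the bookkeeping in the first substitution: one must verify that all the singular and quadratic terms cancel exactly, with no leftover $x^{-1}$, $x$, or $x^2$ coefficients, so that after the change of variable a bona fide Kummer equation (rather than some more general confluent-type equation) emerges. This is a routine but slightly tedious computation — writing $g(x) = e^{-\xi x^2/2}x^{1/2+\nu}$, one has $g'/g = -\xi x + (1/2+\nu)/x$ and $(g'/g)' = -\xi - (1/2+\nu)/x^2$, and then $f'' = (u'' + 2(g'/g)u' + ((g'/g)' + (g'/g)^2)u)g$, after which the coefficient of $u$ is $(g'/g)' + (g'/g)^2 + \xi^2 x^2 + (\nu^2-1/4)/x^2 - E = -2\xi(1/2+\nu) - \xi + \nu^2 - \nu + 1/4 + \nu^2 - 1/4 \cdot(\text{wait})$; carrying this through carefully shows the $x^2$ and $x^{-2}$ terms vanish, leaving a constant, which is the source of the formula for $a$. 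I would present this computation compactly, noting the cancellations, rather than expanding every term. A secondary (but purely expository) point is to state clearly why the four-fold case split for $\mathbf{M}$ is the correct and complete list of second solutions; since the paper has already recorded these four forms just above the proposition, it suffices to invoke that discussion.
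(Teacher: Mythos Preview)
Your proposal is correct and follows essentially the same route as the paper: the paper performs the substitution in two stages (first $f = e^{-\xi x^2/2}g$, then $g = x^{1/2+\nu}h$, then $z = \xi x^2$), whereas you combine the first two into the single ansatz $f = e^{-\xi x^2/2}x^{1/2+\nu}u$ before changing variables to $z = \xi x^2$, but the computation and the identification of the Kummer parameters are identical. The only thing to tidy up is the aborted computation at the end of your sketch (the line trailing off with ``wait''); once you carry the logarithmic-derivative bookkeeping through cleanly you will indeed see the $x^{\pm 2}$ terms cancel and recover the constant that fixes $a$.
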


\begin{proof}
We make the ansatz $f(x) = e^{-\xi x^2/2}g(x)$. The goal of this ansatz is to deal with the harmonic potential $\xi^2 x^2$. We have 
\begin{align*}
    f'(x) &= g'(x)e^{-\xi x^2/2} - \xi xg(x)e^{-\xi x^2/2}, \\
    f''(x) &= g''(x)e^{-\xi x^2/2} - 2\xi xg'(x)e^{-\xi x^2/2} - ng(x)e^{-\xi x^2/2} + \xi^2 x^2g(x)e^{-\xi x^2/2}.
\end{align*}
It follows that $g$ must satisfy 
\begin{align}
    - g''(x) + 2\xi xg'(x) + \xi g(x) + \frac{\nu^2 - 1/4}{x^2}g(x) - Eg(x) = 0.
\end{align}
We now make the transformation $g(x) = x^\alpha h(x)$, for some $\alpha > 0$ to be determined, which is coherent with the Frobenius method. We get
\begin{align*}
    g'(x) &= \alpha x^{\alpha - 1}h(x) + h'(x)x^\alpha, \\
    g''(x) &= \alpha(\alpha-1)x^{\alpha - 2}h(x) + 2\alpha h'(x)x^{\alpha - 1} + h''(x)x^\alpha.
\end{align*}
Plugging into the equation we get that 
\begin{align*}
    x^\alpha \left[ - \frac{\alpha(\alpha - 1)}{x^2}h(x) - \frac{2\alpha}{x}h'(x) - h''(x) + 2\xi \alpha h(x) + 2\xi x h'(x) +  n h(x) + \frac{\nu^2 - 1/4}{x^2}h(x) - E h(x)\right] = 0.
\end{align*}

Choosing $\alpha = \frac{1}{2} + \nu$, which is the largest positive root of $ -\alpha(\alpha - 1) + \nu^2 - 1/4 = 0$, we obtain that $h$ is solution of 
\begin{align*}
    h''(x) + \left( \frac{1 + 2\nu}{x} - 2\xi x \right) h'(x) + \left( E - 2\xi (1 + \nu) \right) h(x) = 0.
\end{align*}

Now, setting $z = \xi x^2$, $\tilde{h}(z) = h(x)$, we get that
\begin{align*}
    h'(x) &= 2\xi x\tilde{h}'(z), \\
    h''(x) &=  2\xi \tilde{h}'(z) + 4\xi^2 x^2\tilde{h}''(z),
\end{align*}
so that $\tilde{h}$ must solve 
\begin{align*}
     2\xi \tilde{h}'(z) + 4\xi^2 x^2\tilde{h}''(z) + \left( \frac{1 + 2\nu}{x} - 2\xi x \right)2\xi x\tilde{h}'(z) + (E - 2\xi (1 + \nu)) \tilde{h}(z) = 0,
\end{align*}
or equivalently 
\begin{align*}
    &4\xi z \tilde{h}''(z) + 2\xi \tilde{h}'(z) +  2\xi (1 + 2\nu)\tilde{h}'(z) - 4\xi z \tilde{h}'(z) + \left( E - 2\xi (1 + \nu) \right)\tilde{h}(z) = 0,
\end{align*}
or equivalently
\begin{align*}
    z \tilde{h}''(z) + \left( 1 + \nu - \xi \right) \tilde{h}'(z) +  \frac{E - 2\xi (1 + \nu)}{4\xi }\tilde{h}(z) = 0.
\end{align*}

This is the Kummer equation introduced in \eqref{eqn: Kummer equation}, and whose first fundamental solutions is given by the Kummer function \eqref{eqn: def Kummer function}, and the second fundamental solution is defined by either \eqref{eqn: second fundamental solution Kummer 1}, \eqref{eqn: second fundamental solution Kummer 2}, \eqref{eqn: second fundamental solution Kummer 3} or \eqref{eqn: second fundamental solution Kummer 4}. Substituting back $z = \xi x^2$, it concludes the proof. 
\end{proof}

We can now give conditions on the coefficients $A,B$ for $f_E$ to be in $D(G_\xi )$, and see the implied necessary and sufficient conditions on the eigenvalues.

\begin{proof}[Proof of theorem \ref{thm: exact form eigenfunctions singular + cond eigenv}]
Let us denote in this proof the first and second fundamental solutions of equation \eqref{eqn: general eigenequation} in Proposition \ref{prop: form eignfunction} respectively by 
\begin{align}
    \varphi_1(x) &= e^{-\xi \frac{x^2}{2}}x^{\frac{1}{2} + \nu}M(a,b,\xi x^2), \\
    \varphi_2(x) &= e^{-\xi \frac{x^2}{2}}x^{\frac{1}{2} + \nu}\mathbf{M}(a,b,\xi x^2),
\end{align}
so that a solution of \eqref{eqn: general eigenequation} writes $f_E(x) = A\varphi_1(x) + B\varphi_2(x)$. For simplicity of the presentation, we omit the computational details. Recall that $a$ and $b$ are defined by \eqref{eqn: parameter candidates} in Proposition \ref{prop: form eignfunction}.\\

Let us first investigate the case $\nu > 0$. Recall that in this case, $H_{0,\nu}^1(0,1) = H_0^1(0,1)$. If $\nu \notin \mathbb{N}$, then $\varphi_2$ is defined using \eqref{eqn: second fundamental solution Kummer 1}. It is not hard to observe that $\varphi_2 \notin H^1(0,1)$, so that we must have $B = 0$. On the other hand, $\varphi_1 \in H^1(0,1)$ and $\varphi_1(0)=0$, so we must only check the boundary condition at $x = 1$. We have $\varphi_1(1) = 0$ if and only if $M(a,b,\xi) = 0$, which concludes the proof in this case.  \\

Now, assume that $\nu \in \mathbb{N} \setminus \{0\}$. In this case $\varphi_2$ may be defined by either \eqref{eqn: second fundamental solution Kummer 2}, \eqref{eqn: second fundamental solution Kummer 3} or \eqref{eqn: second fundamental solution Kummer 4}. We may observe that the argument presented above works in this case too, due to the singular behavior of $\varphi_2$ as $x \rightarrow 0^+$.\\

Now, if $\nu = 0$, we recall that we have $H_0^1(0,1) \subsetneq H_{0,\nu}^1(0,1)$. In this case, $\varphi_2$ is again defined using either \eqref{eqn: second fundamental solution Kummer 2}, \eqref{eqn: second fundamental solution Kummer 3} or \eqref{eqn: second fundamental solution Kummer 4}. Observe that neither $\varphi_1$ or $\varphi_2$ belong to $H^1(0,1)$. But one may verify that 
\begin{align*}
    \int_0^1 \varphi_2'(x)^2 - \frac{1}{4x^2} \varphi_2(x)^2 \ dx = + \infty,\\[6pt]
    \int_0^1 \varphi_1'(x)^2 - \frac{1}{4x^2} \varphi_1(x)^2 \ dx < + \infty,
\end{align*}
using for instance the behavior of these functions as $x \rightarrow 0^+$. Hence, we must have $B = 0$. Once again, for $\varphi_1$ to belong to $H_{0,\nu}^1(0,1)$, the only thing to check is the boundary condition at $x = 1$ since $\varphi_1(0) = 0$. That is verified if and only if $M(a,b,\xi)= 0$, which concludes the proof.
\end{proof}

We can now prove Theorem \ref{thm: link zeros Kummer to eigenvalues}. 

\begin{proof}[Proof of Theorem \ref{thm: link zeros Kummer to eigenvalues}]
     It directly follows from Theorem \ref{thm: exact form eigenfunctions singular + cond eigenv}. Let $b \geq 1$, that we may write $b = 1 + \nu$ for some $\nu \geq 0$, and $\xi > 0$. We can also make the choice to write $a \in \mathbb{C}$ as, for some $\xi > 0$,
    \begin{align*}
        a = - \frac{E - 2\xi(1+\nu)}{4\xi}, 
    \end{align*}
    where $E$ ranges in $\mathbb{C}$. Hence, $a$ is a solution of $M(a,b,\xi) = 0$ if and only if 
    \begin{align*}
        M\left( - \frac{E - 2\xi(1+\nu)}{4\xi}, 1+\nu, \xi \right) = 0.
    \end{align*}
    Thanks to Proposition \ref{prop: form eignfunction} and \ref{thm: exact form eigenfunctions singular + cond eigenv}, this is equivalent to saying that $E$ is an eigenvalue of $G_\xi$.
\end{proof}

\section{A general lower bound on the eigenvalues: proof of Theorem \ref{thm: lower bound eigenvalues}}\label{section: lower bound eigenvalues}

Thanks to Theorem \ref{thm: link zeros Kummer to eigenvalues}, to give an upper bound on the $a$-zeros of $M(a,b,\xi)$ is equivalent to giving a lower bound on the eigenvalues of $G_\xi$. We have two ways of proving the first lower bound \eqref{eqn: lower bound eigenvalues 1} in Theorem \ref{thm: lower bound eigenvalues}.\\

The first one is to actually prove the first upper bound \eqref{eqn: upper bound a zeros 1} in Theorem \ref{thm: summarize a zeros Kummer} and use the identity \eqref{eqn: identity a zeros eigenvalue} in Theorem \ref{thm: link zeros Kummer to eigenvalues}. To prove \eqref{eqn: upper bound a zeros 1}, we use the counting function on the $z$-zeros of the Kummer functions. The second possibility is to use a min-max argument.\\

We stress that the second approach gives an inequality in the large sense, and to obtain a strict inequality, one must slightly rely on the first proof. However, we still choose to disclose both approach here, since the second one will be of use in Section \ref{section: upper bound eigenvalues}.\\ 

The first and second proofs of the first lower bound \eqref{eqn: lower bound eigenvalues 1} in Theorem \ref{thm: link zeros Kummer to eigenvalues} are given respectively in Section \ref{section: proof general first bound counting function} and \ref{section: proof general first bound minmax}.\\

The second upper bound \eqref{eqn: lower bound eigenvalues 2} in Theorem \ref{thm: link zeros Kummer to eigenvalues} follows from a perturbative argument in the sense of quadratic forms for the operator $G_\xi$, using the Rayleigh formula. We prove it in Section \ref{section: proof general second bound}.

\subsection{First proof of the first bound \eqref{eqn: lower bound eigenvalues 1} using the counting function}\label{section: proof general first bound counting function}

We have the following Lemma on the number of $z$-zeros of $M(a,b,z)$. 

\begin{lemma}{\cite[13.9(i)]{OlverHandbook2010}}\label{lemma: counting function}
 Let $a \in \mathbb{R}$, $b \geq 0$, and $z \in \mathbb{R}$. Then, the number of positive $z$-zeros of $M(a,b,z)$, denoted by $p(a,b)$, is finite and verifies
 \begin{align}
    \begin{array}{lccl}
     p(a,b) &=& 0,& \quad \text{ if } a \geq 0, \\
     p(a,b) &=& \lceil -a \rceil,& \quad \text{ if } a < 0,
     \end{array}
 \end{align}
 where $\lceil -a \rceil$ is the smallest integer greater than $-a$.
\end{lemma}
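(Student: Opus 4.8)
The plan is to split into three regimes. When $a \ge 0$ the claim is immediate from the series \eqref{eqn: def Kummer function}: since $b > 0$ we have $(b)_k > 0$, and $a \ge 0$ gives $(a)_k \ge 0$, so every coefficient $(a)_k/((b)_k k!)$ is nonnegative while the constant term is $1$; hence $M(a,b,z) \ge 1 > 0$ for all $z \ge 0$ and $p(a,b) = 0$. I restrict to $b > 0$ throughout, which is all that is needed here; the degenerate cases $b \le 0$ reduce to this one via contiguous relations.

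For $a < 0$, write $n = \lceil -a \rceil \ge 1$, so that $a \in [-n,-n+1)$, and I would prove by induction on $n$ the statement $P(n)$: \emph{for every $b>0$ and every $a \in [-n,-n+1)$, $M(a,b,\cdot)$ has exactly $n$ simple positive zeros, and $(-1)^n M(a,b,z) \to +\infty$ as $z \to +\infty$.} The two tools are, first, the identity $M^{(k)}(a,b,z) = \frac{(a)_k}{(b)_k}M(a+k,b+k,z)$, obtained by differentiating \eqref{eqn: def Kummer function} term by term (using $(a)_{j+1} = a\,(a+1)_j$), and second, the Kummer equation \eqref{eqn: Kummer equation} itself, which at a critical point $c$ of $M(a,b,\cdot)$ (where $M'(a,b,c)=0$) reduces to $M(a,b,c) = \tfrac{c}{a}M''(a,b,c)$. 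The base case $P(1)$, i.e. $a \in [-1,0)$, is easy: then $a+1 \in [0,1)$, so $M(a+1,b+1,\cdot) \ge 1 > 0$, whence $M'(a,b,z) = \tfrac ab M(a+1,b+1,z) < 0$ for all $z \ge 0$; thus $M(a,b,\cdot)$ decreases strictly from $M(a,b,0)=1$, and since $M^{(1)}$ keeps the constant sign $-1$ (and does not vanish at infinity) one gets $(-1)M(a,b,z)\to+\infty$, so there is exactly one positive zero.

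For the inductive step, given $P(n-1)$ and $a \in [-n,-n+1)$, note $a+1 \in [-n+1,-n+2)$, which is the interval for $n-1$; by induction $M(a+1,b+1,\cdot)$ has exactly $n-1$ simple positive zeros $0 < c_1 < \dots < c_{n-1}$, equals $1$ at $0$, and has sign $(-1)^{n-1}$ for large $z$, so its sign on $(c_{j-1},c_j)$ is $(-1)^{j-1}$ (with $c_0:=0$, $c_n:=+\infty$). Since $M'(a,b,z)=\tfrac ab M(a+1,b+1,z)$ and $a<0$, the $c_j$ are precisely the critical points of $M(a,b,\cdot)$, between which $M$ is strictly monotone, alternately decreasing and increasing, starting with a decrease (as $M'(a,b,0)=a/b<0$); the last branch $(c_{n-1},+\infty)$ runs off to $(-1)^n\infty$, because $M^{(n)}(a,b,z)=\tfrac{(a)_n}{(b)_n}M(a+n,b+n,z)$ with $a+n\in[0,1)$ has constant sign $(-1)^n$ (all $n$ factors of $(a)_n$ are negative since $a<-n+1$) and tends to $(-1)^n\infty$. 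The crux is to pin down the signs of the extremal values $M(a,b,c_j)$: from $M(a,b,c_j)=\tfrac{c_j}{a}M''(a,b,c_j)=\tfrac{c_j(a+1)}{b(b+1)}M(a+2,b+2,c_j)$ together with $M(a+2,b+2,z)=\tfrac{b+1}{a+1}M'(a+1,b+1,z)$, the sign of $M(a,b,c_j)$ is governed by $\operatorname{sign}M'(a+1,b+1,c_j)$, which is $(-1)^j$ because it is the sign of $M(a+1,b+1,\cdot)$ just to the right of its simple zero $c_j$; doing the bookkeeping (and using $a<0$ and $a+1<0$ for $n\ge2$) gives $\operatorname{sign}M(a,b,c_j)=(-1)^j$. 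Hence, using $M(a,b,0)=1>0$ and $(-1)^nM(a,b,z)\to+\infty$, on each of the $n$ monotonicity intervals $(0,c_1),(c_1,c_2),\dots,(c_{n-1},+\infty)$ the function $M(a,b,\cdot)$ has opposite signs at the two endpoints, so exactly one simple zero lies in each; this yields exactly $n$ positive zeros, which proves $P(n)$ and hence $p(a,b)=\lceil -a\rceil$. (As a sanity check, $M^{(n)}(a,b,\cdot)$ has no positive zero, so Rolle's theorem already forces $p(a,b)\le n$.)

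I expect the genuine obstacle to be precisely this sign computation for the extrema $M(a,b,c_j)$: once one knows $M$ takes strictly alternating signs at its critical points the count is immediate, but extracting those signs means chaining the derivative identity with the Kummer equation at a critical point and carefully tracking how the sign of $M(a+1,b+1,\cdot)$ at its simple zeros propagates, with the small-$n$ cases and the integer endpoints $a=-n$ requiring a little extra care. An alternative I would keep in reserve is to use that $M(-n,b,z)$ is, up to a positive factor, the generalized Laguerre polynomial $L_n^{(b-1)}(z)$, which for $b>0$ has exactly $n$ simple positive zeros, and then pass to non-integer $a$ by continuity of the (necessarily simple) zeros in $a$, ruling out zeros reaching $z=0$ — where $M(a,b,0)=1$ — and, via the large-$z$ asymptotics, escaping to $z=+\infty$.
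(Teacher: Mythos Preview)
The paper does not supply a proof of this lemma: it is quoted verbatim from \cite[13.9(i)]{OlverHandbook2010} and used as a black box. So there is no ``paper's own proof'' to compare against; your proposal is simply a self-contained justification of a cited fact.

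Your argument is correct. The derivative identity $M'(a,b,z)=\tfrac{a}{b}M(a+1,b+1,z)$ together with the inductive hypothesis pins down the critical points and the monotonicity pattern of $M(a,b,\cdot)$, and the sign computation at the extrema is the right idea: evaluating the Kummer equation at a critical point $c_j$ and unwinding gives
\[
M(a,b,c_j)=\frac{c_j}{b}\,M'(a+1,b+1,c_j),
\]
so the sign of $M(a,b,c_j)$ is exactly the sign of the derivative of $M(a+1,b+1,\cdot)$ at its $j$-th simple zero, namely $(-1)^j$. Note that the factors $a$ and $a+1$ cancel in this identity, so your parenthetical ``using $a<0$ and $a+1<0$'' is not actually needed for the sign, though $a+1\neq 0$ is needed to divide along the way (and this holds for $n\ge 2$). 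For the growth at infinity, it is cleaner to use the inductive hypothesis directly: $(-1)^{n-1}M(a+1,b+1,z)\to+\infty$ combined with $M'(a,b,z)=\tfrac{a}{b}M(a+1,b+1,z)$ and $a/b<0$ gives $(-1)^{n}M'(a,b,z)\to+\infty$, hence $(-1)^{n}M(a,b,z)\to+\infty$; your route via $M^{(n)}$ also works but requires an extra word when $a+n=0$ (then $M^{(n)}$ is a nonzero constant rather than tending to infinity). The alternative you mention via Laguerre polynomials and continuity of simple zeros in $a$ is also standard and would work equally well.
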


We can give the first proof of \eqref{eqn: lower bound eigenvalues 1}.

\begin{proof}[First proof of the first lower bound \eqref{eqn: lower bound eigenvalues 1} in Theorem \ref{thm: lower bound eigenvalues}]
    Recall from Theorem \ref{thm: link zeros Kummer to eigenvalues}, that $a_{\xi,k}$ is a $a$-zero of $M(a,b,\xi)$, with $b = 1 + \nu$, $\nu \geq 0$, if and only if 
    \begin{align}
        a_{\xi,k} = - \frac{\lambda_{\xi,k} - 2\xi(1+\nu)}{4\xi},
    \end{align}
    where $\lambda_{\xi,k}$ is an eigenvalue of the operator $(G_\xi,D(G_\xi))$. By Sturm-Liouville oscillation theorem, the number of zeros in $[0,1]$ of the eigenfunction $g_{\xi,k}$, for $k \geq 0$, is $k + 2$. In particular, due to the form of the eigenfunctions from Proposition \ref{thm: exact form eigenfunctions singular + cond eigenv}, outside $x=0$, we have $g_{\xi,k}(x) = 0$ if and only if $M(a_{\xi,k},1+\nu,\xi x^2) = 0$. It follows that $a_{\xi,k}$ is a $a$-zero if and only if $M(a_{\xi,k},1+\nu,\xi x^2)$ has at least $k+1$ $x$-zeros on $\mathbb{R}^+$, since $M(a,b,0) = 1$. Hence, from Lemma \ref{lemma: counting function}, we must have $p(a_{\xi,k},b) \geq k+1$, which is equivalent to saying that $-a_{\xi,k} > k$. Expanding $a_{\xi,k}$ in term of $\lambda_{\xi,k}$ by identity \eqref{eqn: identity a zeros eigenvalue}, we get the bound on the eigenvalues.     
\end{proof}

\subsection{Second proof of the first bound \eqref{eqn: lower bound eigenvalues 1} using the min-max theorem}\label{section: proof general first bound minmax}

Let us now give a proof of the first lower bound \eqref{eqn: lower bound eigenvalues 1} in Theorem \ref{thm: lower bound eigenvalues} using Rayleigh formula. First, set $\Omega_\xi  = (0,\sqrt{\xi})$. For $x \in (0,1)$, we make the change of variable $y = \sqrt{\xi}x$ and $u(x) = v(\sqrt{\xi}x)$. Thus, $v$ satisfies
\begin{align*}
    G_\xi  v(y) = \xi \left(-\partial_y^2 v(y) + y^2 v(y) + \frac{\nu^2 - 1/4}{y^2}v(y) \right). 
\end{align*}
Thus, we set
\begin{align}\label{eqn: definition tilde G_xi}
    \begin{array}{crl}
    D(\tilde{G}_\xi ) &:=& \{ v \in H_{0,\nu}^1(\Omega_\xi), \quad \tilde{G}_\xi v \in L^2(\Omega_\xi) \}, \\[6pt]
    \tilde{G}_\xi  &=& -\partial_y^2 + y^2 + \frac{\nu^2 - 1/4}{y^2},      
    \end{array}
\end{align}
with eigenvalues $\tilde{\lambda}_{\xi,k}$ which therefore satisfy
\begin{align*}
    \tilde{\lambda}_{\xi,k} = \frac{\lambda_{\xi,k}}{\xi}.
\end{align*}

Now, denote by $(G,D(G))$ the differential operator 
\begin{align}
\begin{array}{ccl}
    D(G) &=& \{ v \in H_{0,\nu}^1(\mathbb{R^+}), \quad Gv \in L^2(\mathbb{R^+}) \}, \\[6pt]
    G &=& -\partial_x^2 + x^2 + \frac{\nu^2 - 1/4}{x^2}, 
\end{array}
\end{align}
and denote its eigenvalues by $\mu_k$, $k\geq 0$. Let us describe precisely the eigenvalues of $G$ and the associated eigenfunctions, which are already known (see \textit{e.g.} \cite{exner2002generalized}), but we recall the proof.

\begin{proposition}\label{prop: eigv and eigenf of G on R+}
    The eigenfunctions of $G$ are of the form 
    \begin{align}
        \Phi_k(x) = Ae^{-x^2/2}x^{1/2 + \nu}M(-k,1+\nu,x^2), \quad A \in \mathbb{R},
    \end{align}
    associated to the eigenvalues
    \begin{align}\label{eqn: eigenvalues of G}
        \mu_k = 4k + 2(1+\nu), \quad k \geq 0.
    \end{align}
\end{proposition}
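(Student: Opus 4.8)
The plan is to follow the scheme of the proof of Theorem~\ref{thm: exact form eigenfunctions singular + cond eigenv}, now working on $(0,+\infty)$ instead of $(0,1)$; the only structural difference is that the regular endpoint $x=1$ is replaced by the endpoint $x=+\infty$, and it is there that the quantization of $E$ will come from. First I would note that the computation carried out in the proof of Proposition~\ref{prop: form eignfunction} is purely local and nowhere uses that one works on $(0,1)$: specializing it to the value $\xi=1$ shows that every solution of $Gf=Ef$ on $(0,+\infty)$ is of the form
\[
  f_E(x) = e^{-x^2/2}\,x^{\frac12+\nu}\bigl(A\,M(a,b,x^2) + B\,\mathbf{M}(a,b,x^2)\bigr),
\]
with $b=1+\nu$ and $a = -\dfrac{E-2(1+\nu)}{4}$ as in \eqref{eqn: parameter candidates}.

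Next I would treat the endpoint $x=0^+$. The discussion here is identical to the one in the proof of Theorem~\ref{thm: exact form eigenfunctions singular + cond eigenv}: whether $\nu>0$, $\nu\in\mathbb{N}\setminus\{0\}$ or $\nu=0$, the second fundamental solution $\mathbf{M}(a,b,x^2)$ is too singular as $x\to0^+$ to belong to $H^1_{0,\nu}$ (in the critical case $\nu=0$ this is seen through the divergence of $\int_0^{\,\cdot}\varphi_2'(x)^2-\varphi_2(x)^2/(4x^2)\,dx$ against the convergence of the analogous integral for $\varphi_1$), so $f_E\in H^1_{0,\nu}(\mathbb{R}^+)$ forces $B=0$, while $\varphi_1(x)=e^{-x^2/2}x^{1/2+\nu}M(a,b,x^2)$ meets the required condition at $0$. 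The genuinely new ingredient is the behavior at $x\to+\infty$: by the standard large-argument asymptotics of the Kummer function, if $a\notin\{0,-1,-2,\dots\}$ then $M(a,b,z)\sim \frac{\Gamma(b)}{\Gamma(a)}e^{z}z^{a-b}$ as $z\to+\infty$, hence $\varphi_1(x)\sim C\,e^{+x^2/2}x^{1/2+\nu+2(a-b)}$, which is not square-integrable near $+\infty$. Therefore $f_E\in L^2(\mathbb{R}^+)$ forces $a=-k$ for some integer $k\geq0$. Conversely, when $a=-k$ the series \eqref{eqn: def Kummer function} terminates, so $M(-k,1+\nu,x^2)$ is a polynomial of degree $k$ in $x^2$ and $\Phi_k(x)=Ae^{-x^2/2}x^{1/2+\nu}M(-k,1+\nu,x^2)$ decays faster than any inverse power; one checks directly that $\Phi_k\in D(G)$ and $G\Phi_k=\mu_k\Phi_k$, where solving $-k=-(E-2(1+\nu))/4$ gives $\mu_k=4k+2(1+\nu)$, which is \eqref{eqn: eigenvalues of G}.

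Finally I would verify that no eigenvalue has been missed. Since $x^2+(\nu^2-1/4)/x^2\to+\infty$ at both endpoints, $G$ is self-adjoint with compact resolvent and, by Sturm--Liouville theory, has a simple discrete spectrum $\mu_0<\mu_1<\cdots$. By Lemma~\ref{lemma: counting function}, $M(-k,1+\nu,\cdot)$ has exactly $k$ positive zeros, so $\Phi_k$ has exactly $k$ zeros in $(0,+\infty)$; by the Sturm oscillation theorem $\Phi_k$ is precisely the eigenfunction of the $(k{+}1)$-th eigenvalue, which pins down the labelling and shows that $\{\mu_k=4k+2(1+\nu)\}_{k\geq0}$ is the full spectrum. (Equivalently, the completeness of the generalized Laguerre polynomials $L_k^{(\nu)}$ in $L^2((0,\infty),t^\nu e^{-t}\,dt)$ yields, via $t=x^2$, that $\{\Phi_k\}_{k\geq0}$ is a Hilbert basis of $L^2(\mathbb{R}^+)$.) The only mildly delicate points are the elimination of $\mathbf{M}$ at the origin --- which is quoted verbatim from the proof of Theorem~\ref{thm: exact form eigenfunctions singular + cond eigenv} --- and this final exhaustiveness step; everything else is the classical reduction to Laguerre-type eigenfunctions of a radial harmonic oscillator.
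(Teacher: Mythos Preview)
Your proof is correct and follows essentially the same approach as the paper: reduce via Proposition~\ref{prop: form eignfunction} (with $\xi=1$) and the singularity analysis at $x=0$ from Theorem~\ref{thm: exact form eigenfunctions singular + cond eigenv} to the single branch $\varphi_1$, then use the large-argument asymptotics of $M(a,b,\cdot)$ to force $a\in-\mathbb{N}$. The final exhaustiveness/labelling paragraph you include goes beyond what the paper writes (the paper stops at the ``if and only if''), but it is sound and does no harm.
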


\begin{proof}
    The exact same computations as for Proposition \ref{prop: form eignfunction} in the case $\xi = 1$, with the same arguments as for the proof of Theorem \ref{thm: exact form eigenfunctions singular + cond eigenv}, give that an eigenfunction of $G$ must be of the form
    \begin{align*}
        \Phi_E(x) = e^{-x^2/2} x^{\frac{1}{2} + \nu}M \left(a,b,x^2 \right),
    \end{align*}
    with $a$, $b$ defined in Proposition \ref{prop: form eignfunction} (with $\xi=1$) and $a$ well chosen as follows. \\
    
    Since $\Phi_E(0) = 0$ for every $E \in \mathbb{R}$, $\Phi_E$ is an eigenfunction if and only if it belongs to $L^2(\mathbb{R}^+)$. The asymptotic for large $x$ of the confluent function $M(a,b,\cdot)$, under the restriction that $a \neq 0, -1, ...$, is given by (\cite[(13.7.1)]{OlverHandbook2010})
    \begin{align*}
        M(a,b,x^2) = e^{x^2}\frac{x^{a-b}}{\Gamma(a)\Gamma(b)}\left( 1 + O\left(\frac{1}{x} \right) \right).
    \end{align*}
    On the other hand, if $a = -k$, $k \in \mathbb{N}$, then $M(a,b,\cdot)$ is a polynomial of degree $k$. It follows that $\Phi_E$ is an eigenfunction if and only if $a = -k$, that is $E = 4k + 2(1+\nu)$.
\end{proof}

Therefore, by Rayleigh formula, we have the following second proof of the first lower bound \eqref{eqn: lower bound eigenvalues 1} in Theorem \ref{thm: lower bound eigenvalues}

\begin{proof}[Second proof of the first lower bound \eqref{eqn: lower bound eigenvalues 1} in Theorem \ref{thm: lower bound eigenvalues}]
    This is simply an application of Rayleigh formula. Denote by $\tilde{E}_{n,k} \subset L^2(0,+\infty)$ the vector space of dimension $k$ spanned by the normalized eigenfunctions $\tilde{f}_{n,1},...,\tilde{f}_{n,k}$ of $\tilde{G}_\xi $, extended by zero outside $\Omega_\xi$. We have 
    \begin{align*}
        \frac{\lambda_{\xi,k}}{\xi} = \tilde{\lambda}_{\xi,k} &= \underset{\underset{\|f\|_{L^2(\Omega_\xi )}=1}{f \in \tilde{E}_{n,k}}}{\max} \int_{\Omega_\xi } f'(x)^2 + x^2f(x)^2 + \frac{\nu^2-1/4}{x^2}f(x)^2 \ dx \\
        &= \underset{\underset{\|f\|_{L^2(\mathbb{R}^+)}=1}{f \in \tilde{E}_{n,k}}}{\max} \int_0^{+\infty} f'(x)^2 + x^2f(x)^2 + \frac{\nu^2-1/4}{x^2}f(x)^2 \ dx \\
        &\geq \underset{\underset{\operatorname{dim}(V) = k}{V \subset L^2(\mathbb{R}^+)}}{\min} \underset{\underset{\|f\|_{L^2(\mathbb{R}^+)}=1}{f \in V}}{\max} \int_0^{+\infty} f'(x)^2 + x^2f(x)^2 + \frac{\nu^2-1/4}{x^2}f(x)^2 \ dx.
    \end{align*}
    This last min-max is exactly the $k$-th eigenvalue $\mu_k$ of $G$. The bound given here is large, but thanks to Lemma \ref{lemma: counting function}, using the same idea as in the first proof, the inequality is actually strict.  
\end{proof}

\subsection{Proof of the second upper bound \eqref{eqn: lower bound eigenvalues 2}}\label{section: proof general second bound}

Before proving the second lower bound \eqref{eqn: lower bound eigenvalues 2} in Theorem \ref{thm: lower bound eigenvalues}, we first need the following estimate. 

\begin{proposition}\label{prop: distance eigenvalue zero bessel}
    For every $\xi > 0$ and $k \geq 0$, we have 
    \begin{align}
        \left| \lambda_{\xi,k} - (j_{\nu,k})^2 \right| \leq \xi^2,
    \end{align}
    where $j_{\nu,k}$ is the $(k+1)$-th zero of the Bessel function of the first kind $J_\nu$.
\end{proposition}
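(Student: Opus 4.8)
The plan is to realise $G_\xi$ as a bounded form perturbation of the pure Bessel-type operator on $(0,1)$ and to conclude by the min--max principle. Concretely, I would introduce
\[
B_\nu \;=\; -\partial_x^2 + \frac{\nu^2-1/4}{x^2}\qquad\text{on }(0,1),
\]
defined as the Friedrichs extension associated with the quadratic form $q_{B_\nu}(f)=\int_0^1 f'(x)^2+\frac{\nu^2-1/4}{x^2}f(x)^2\,dx$. Since $q_{B_\nu}$ and the form $\|\cdot\|_\nu^2$ of $G_\xi$ differ exactly by the nonnegative term $\xi^2\int_0^1 x^2 f(x)^2\,dx$, which is bounded by $\xi^2\|f\|_{L^2(0,1)}^2$ because $0\le x^2\le 1$ on $(0,1)$, the operator $B_\nu$ has the same form domain $H_{0,\nu}^1(0,1)$ as $G_\xi$; it is therefore self-adjoint, positive, with compact resolvent and simple discrete spectrum. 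Running the same computation and the same boundary analysis as in the proof of Theorem \ref{thm: exact form eigenfunctions singular + cond eigenv} --- now with $\sqrt{x}\,J_{\pm\nu}(\sqrt{\lambda}\,x)$ (or, when $\nu\in\mathbb{N}$, the logarithmic second solution) playing the role of $\varphi_1,\varphi_2$ --- membership of an eigenfunction in $H_{0,\nu}^1(0,1)$, \emph{including the critical case $\nu=0$}, forces it to be a multiple of $\sqrt{x}\,J_\nu(\sqrt{\lambda}\,x)$, and the Dirichlet condition at $x=1$ then reads $J_\nu(\sqrt{\lambda})=0$. Hence the $k$-th eigenvalue of $B_\nu$ is exactly $(j_{\nu,k})^2$, $k\ge 0$.

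Next, since $B_\nu$ and $G_\xi$ share the form domain $H_{0,\nu}^1(0,1)$, for every $f$ in it one has
\[
q_{B_\nu}(f)\;\le\;\|f\|_\nu^2\;=\;q_{B_\nu}(f)+\xi^2\!\int_0^1 x^2 f(x)^2\,dx\;\le\;q_{B_\nu}(f)+\xi^2\|f\|_{L^2(0,1)}^2 ,
\]
again using $0\le x^2\le 1$ on $(0,1)$. By the Courant--Fischer (min--max) characterisation of the eigenvalues of a semibounded self-adjoint operator with compact resolvent, the left-hand inequality gives $(j_{\nu,k})^2=\lambda_k(B_\nu)\le\lambda_{\xi,k}$ for every $k\ge 0$, and the right-hand one gives $\lambda_{\xi,k}\le\lambda_k(B_\nu)+\xi^2=(j_{\nu,k})^2+\xi^2$. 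Therefore
\[
0\;\le\;\lambda_{\xi,k}-(j_{\nu,k})^2\;\le\;\xi^2\qquad\text{for all }\xi>0,\ k\ge 0,
\]
which is in fact slightly stronger than the asserted two-sided bound $\lvert\lambda_{\xi,k}-(j_{\nu,k})^2\rvert\le\xi^2$.

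The core of the argument is the standard monotonicity of eigenvalues under a bounded form perturbation, so the only genuinely delicate point is the identification of $\operatorname{spec}(B_\nu)$ with $\{(j_{\nu,k})^2\}_{k\ge 0}$ together with the matching of its form domain with $H_{0,\nu}^1(0,1)$; the sole nontrivial instance is $\nu=0$, where one must discard the logarithmically singular solution at the origin, exactly as done for $\varphi_2$ in the proof of Theorem \ref{thm: exact form eigenfunctions singular + cond eigenv}. No new obstacle arises there, and the rest is routine.
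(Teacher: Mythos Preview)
Your proposal is correct and follows essentially the same approach as the paper: both treat $G_\xi$ as a bounded quadratic-form perturbation of the Bessel-type operator $G_0=-\partial_x^2+(\nu^2-1/4)/x^2$ on $(0,1)$ and invoke min--max monotonicity, the paper citing \cite[Theorem 11, p.~104]{dautray2012mathematical} for the perturbation estimate and \cite{martinez2018cost} for the identification $\lambda_{0,k}=(j_{\nu,k})^2$, while you spell out both steps directly. Your observation that the bound is in fact one-sided, $0\le\lambda_{\xi,k}-(j_{\nu,k})^2\le\xi^2$, is a small but genuine sharpening.
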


\begin{remark}
    The $j_{\nu,k}^2$ are the eigenvalues of the  Dirichlet Laplacian on the unit disk. Proposition \ref{prop: distance eigenvalue zero bessel} is a comparison principle between these eigenvalues and those of the Laplacian.
\end{remark}

\begin{proof}
    We apply \cite[Theorem 11, p. 104]{dautray2012mathematical}. We see the harmonic potential $\xi^2x^2$ as a zeroth-order perturbation in the sense of quadratic forms of the operator   
    \begin{align}
            G_0 = -\partial_x^2 + \frac{\nu^2 - 1/4}{x^2},  
    \end{align}
    with the same domain as those described at the beginning of Section \ref{section: link Kummer eigenfunction}.\\
    
    \cite[Theorem 11, p. 104]{dautray2012mathematical} states that 
    \begin{align}
        |\lambda_{\xi,k} - \lambda_{0,k}| \leq \xi^2.
    \end{align}
    The eigenvalues of $G_0$ have already been extensively studied in the literature. For instance, from \cite[Proposition III.1]{martinez2018cost}, we know that (be careful that in \cite[Eq. (III.1)]{martinez2018cost}, they use a different convention for the equation),
    \begin{align}
        \lambda_{0,k} = (j_{\nu,k})^2, \quad k \geq 0,
    \end{align}
    where $j_{\nu,k}$ is the $(k+1)$-th zero of the Bessel function of the first kind $J_\nu$ (usually these zeros are indexed from $k = 1$, but in this paper our indexes start from $k=0$). The proposition follows.
\end{proof}

We now prove the second bound \eqref{eqn: lower bound eigenvalues 2} in Theorem \ref{thm: lower bound eigenvalues}.

\begin{proof}[Proof of the second bound \eqref{eqn: lower bound eigenvalues 2} in Theorem \ref{thm: lower bound eigenvalues}]
    
From \eqref{eqn: lower bound eigenvalues 1} in Theorem \ref{thm: lower bound eigenvalues}, we have that for every $k \geq 0$ and $\xi > 0$, 
\begin{align*}
    \lambda_{\xi,k} \geq 4\xi k.
\end{align*}
Therefore, given any $c> 0$, for any $\xi > 0$, and for any $k \leq 4\xi/c$, we have 
\begin{align}\label{eqn: lower bound k^2 in proof of th}
    \lambda_{\xi,k} \geq ck^2.
\end{align}

On the other hand, from Proposition \ref{prop: distance eigenvalue zero bessel}, we have, for every $\xi > 0$ and every $k \geq 0$,
\begin{align}\label{eqn: lower bound eigenvalue from distance to bessel}
    \lambda_{\xi,k} \geq (j_{\nu,k})^2 - \xi^2, 
\end{align}
where we recall that $j_{\nu,k}$ is the $(k+1)$-th zero of the Bessel function of the first kind. These zeros satisfy, as $k \rightarrow + \infty$ (see e.g. \cite[Chapter XV]{watson1922treatise}), 
\begin{align}\label{eqn: asymptotic zero bessel}
    j_{\nu,k} = \left( k + \frac{\nu}{2} + \frac{3}{4} \right)\pi + O\left( \frac{1}{k} \right).
\end{align}

Thus, from \eqref{eqn: lower bound eigenvalue from distance to bessel} and \eqref{eqn: asymptotic zero bessel}, at fixed $\nu \geq 0$, for any $\delta > 0$ small, there exists $\xi_0 > 0$, such that for every $\xi \geq \xi_0$, for every $k \geq 4\xi/c$, we have 
\begin{align}
  \lambda_{\xi,k} \geq k^2 \pi^2 - (1+\delta)\xi^2 > 0.  
\end{align}

We additionally want that 
\begin{align*}
  k^2 \pi^2 - (1+\delta)\xi^2 \geq ck^2.  
\end{align*}

This is equivalent to 
\begin{align}\label{eqn: lower bound k in proof proposition 3.2}
    k \geq \sqrt{\frac{1+\delta}{\pi^2 - c}}\xi.
\end{align}
Thus, we ask that $c > 0$ satisfies $c < \pi^2$ and 
\begin{align}\label{eqn: in proof proposition 3.2}
    \sqrt{\frac{1+\delta}{\pi^2-c}} = \frac{4}{c},
\end{align}
so that \eqref{eqn: lower bound k in proof proposition 3.2} becomes $k \geq 4\xi/c$ and thus \eqref{eqn: lower bound k^2 in proof of th} holds for every $k \geq 0$. Equation \eqref{eqn: in proof proposition 3.2} is equivalent to   
\begin{align}
    \frac{(1+\delta)}{16}c^2 + c - \pi^2 = 0. 
\end{align}
One may show that a solution of this equation that satisfies $0<c<\pi^2$ exists. This concludes the proof by choosing such a value of $c$.
\end{proof}

\section{Upper bound on the low-lying eigenvalues: proof of Theorem \ref{thm: upper bound eigenvalues lower range}}\label{section: upper bound eigenvalues}

In this section, we study the low-lying eigenvalues of $G_\xi$. That is, for $\tau \in (0,1)$, the eigenvalues indexed by $k \leq \left\lfloor \frac{\tau \xi}{4} \right\rfloor$. \\

We show that these eigenvalues of $G_\xi$ stabilize exponentially fast, and uniformly, near the values $\xi \mu_k = 4\xi k + 2\xi(1+\nu)$, where we recall that the $\mu_k$'s are the eigenvalues of $G$ defined by \eqref{eqn: eigenvalues of G} in Proposition \ref{prop: eigv and eigenf of G on R+}. We give an upper bound exponentially close to the first lower bound \eqref{eqn: lower bound eigenvalues 1} in Theorem \ref{thm: lower bound eigenvalues}. \\

The results of this section are very much analogous to the spectral analysis done for the Grushin operator in Fourier, with eigenvalues in the lower regime, in \cite[Section 2.1.2]{allonsius2021analysis}. In particular, Theorem \ref{thm: upper bound eigenvalues lower range} is the analog of \cite[Proposition 2.5]{allonsius2021analysis}. Although the technical lemmas and their proofs are similar to those of \cite{allonsius2021analysis}, the overall approach here will be slightly different. We choose a WKB-type approach to the problem, while in \cite[Proposition 2.5]{allonsius2021analysis}, their proof is more constructive.  

\subsection{Proof of Theorem \ref{thm: upper bound eigenvalues lower range}}\label{section: proof theorem upper bound eigenvalues}

Our approach relies on the following well-known Lemma for self-adjoint operators. 

\begin{lemma}\label{lemma: spectral distance}
    Let $A$ be a self-adjoint operator on a Hilbert space $H$, with domain $D(A)$, and $\lambda \in \mathbb{R}$. Then, 
    \begin{align}
        \operatorname{dist}(\lambda, \sigma(A)) \leq \frac{\|(A-\lambda)u\|}{\|u\|}, \quad \text{for every } u \in D(A),
    \end{align}
where $\sigma(A)$ denotes the spectrum of $A$.
\end{lemma}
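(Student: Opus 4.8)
\textbf{Proof plan for Lemma \ref{lemma: spectral distance}.}

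The plan is to invoke the spectral theorem for (possibly unbounded) self-adjoint operators. Write $A = \int_{\mathbb{R}} t \, dE(t)$, where $E$ is the projection-valued spectral measure of $A$, supported on $\sigma(A) \subset \mathbb{R}$. For $u \in D(A)$, the scalar measure $\mu_u := \langle E(\cdot)u, u\rangle$ is a finite positive Borel measure with total mass $\mu_u(\mathbb{R}) = \|u\|^2$, supported on $\sigma(A)$, and the functional calculus gives
\begin{align*}
    \|(A-\lambda)u\|^2 = \int_{\sigma(A)} (t-\lambda)^2 \, d\mu_u(t).
\end{align*}
Since by definition $|t-\lambda| \geq \operatorname{dist}(\lambda, \sigma(A))$ for every $t$ in the support of $\mu_u$, we bound the integrand below by the constant $\operatorname{dist}(\lambda,\sigma(A))^2$, obtaining $\|(A-\lambda)u\|^2 \geq \operatorname{dist}(\lambda,\sigma(A))^2 \, \mu_u(\sigma(A)) = \operatorname{dist}(\lambda,\sigma(A))^2 \|u\|^2$. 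Taking square roots and dividing by $\|u\|$ (the statement being vacuous for $u = 0$) yields the claim.

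An equivalent route that avoids the full spectral calculus: if $\lambda \in \sigma(A)$ the inequality is trivial, so assume $\lambda \notin \sigma(A)$. Then $A - \lambda$ is boundedly invertible and, for self-adjoint (more generally normal) operators, $\|(A-\lambda)^{-1}\| = \operatorname{dist}(\lambda,\sigma(A))^{-1}$. Given $u \in D(A)$, set $v = (A-\lambda)u$, so that $u = (A-\lambda)^{-1}v$ and
\begin{align*}
    \|u\| \leq \|(A-\lambda)^{-1}\|\,\|v\| = \frac{\|(A-\lambda)u\|}{\operatorname{dist}(\lambda,\sigma(A))},
\end{align*}
which rearranges to the stated bound.

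There is essentially no obstacle here; the only point requiring a standard citation is the identity $\|(A-\lambda)^{-1}\| = \operatorname{dist}(\lambda,\sigma(A))^{-1}$ for self-adjoint operators (equivalently, the estimate $(t-\lambda)^2 \ge \operatorname{dist}(\lambda,\sigma(A))^2$ on the spectrum together with the spectral theorem), which can be quoted from any standard reference on functional analysis. I would present the first argument as the main proof for brevity.
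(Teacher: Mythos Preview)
Your proof is correct; both routes you outline are standard and valid. Note, however, that the paper does not actually prove this lemma: it is introduced as ``the following well-known Lemma for self-adjoint operators'' and stated without proof, so there is no paper argument to compare against. Your spectral-theorem computation is the canonical justification and would be entirely appropriate to include if a proof were required.
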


The idea is to find functions $u_{\xi,k}$, called quasi-modes (or pseudo-eigenfunctions), that act like approximate eigenfunctions for our operator $G_\xi$, and are associated to approximate eigenvalues, called pseudo-eigenvalues, $\tilde{\mu}_{\xi,k}$. Then, making them elements of $D(G_\xi)$, we apply the above Lemma. The usual WKB analysis allows one to construct such objects, but in the case where the potential is singular, it is much more complicated. However, by dilatation, we see that we can use the eigenfunctions of $G$ on $\mathbb{R}^+$, namely, the Laguerre polynomials. \\

Indeed, recall from Section \ref{section: proof general first bound minmax} that we dilated our eigenproblem and wrote 
\begin{align*}
    \frac{\lambda_{\xi,k}}{\xi} = \tilde{\lambda}_{\xi,k}, 
\end{align*}
where $\tilde{\lambda}_{\xi,k}$ is the $k$-th eigenvalue of $\tilde{G}_\xi$ on $\Omega_\xi = (0,\sqrt{\xi})$ defined by \eqref{eqn: definition tilde G_xi}. When $\xi$ becomes large positive, we can expect $\tilde{\lambda}_{\xi,k}$ to stabilize near the eigenvalues $\mu_k$ of $G$ on $\mathbb{R}^+$ for small values of $k \geq 0$. The eigenfunctions $\Phi_k$ of $G$ are given in Proposition \ref{prop: eigv and eigenf of G on R+}. Since the parameter $a_k$ in the definition of $\Phi_k$ is a negative integer, the Kummer function is a polynomial by definition, and $\Phi_k$ can be expressed in term of the (generalized) Laguerre polynomials. The $n$-th Laguerre polynomial of order $\alpha > - 1$ is defined by
\begin{align}\label{eqn: definition laguerre polynomial}
    L_n^{(\alpha)}(r) = \sum_{j=0}^n (-1)^j \binom{n + \alpha}{n-j}\frac{r^j}{j!}, \quad r \in (0,+\infty), \quad n \in \mathbb{N}.
\end{align}
It is solution of the Strum-Liouville eigenvalue problem
\begin{align}
    r^{-\alpha}e^{r}\left( r^{1+\alpha}e^{-r}(L_n^{(\alpha)}(r))'\right) + nL_n^{(\alpha)}(r) = 0.
\end{align}

We have the link between the Kummer functions \eqref{eqn: def Kummer function} and the Laguerre polynomials $L_k^{(\alpha)}$ (\cite[(13.6.19)]{OlverHandbook2010}),

\begin{align}
    (1+\alpha)_n M(-n, \alpha + 1, r) = n!L_n^{(\alpha)}(r),
\end{align}

from which we deduce
\begin{align}\label{eqn: expression eigenfunctions G laguerre}
    \Phi_k(x) = e^{-x^2/2}x^{1/2 + \nu} \frac{k!}{(1+\nu)_k}L_k^{(\nu)}(x^2).
\end{align}

But we cannot use $\Phi_k$ as is, since it does not satisfy the zero boundary condition at $x = 1$ to belong to $D(\tilde{G}_\xi)$. Thus, we subtract correctly to it its boundary value at $x = 1$ in order to satisfy the condition at $x = 1$ and keep $\Phi_k(0) = 0$. Hence, to prove Proposition \ref{thm: upper bound eigenvalues lower range}, using Lemma \ref{lemma: spectral distance} with the operator $\tilde{G}_\xi$, we will choose as pseudo-eigenfunctions \footnote{In the case of Robin boundary condition $u' = \gamma u$ at $x=\sqrt{\xi}$, one can look for quasi-modes of the form $\phi_{\xi,k}(x) = \Phi_k(x) - Cx^{1/2 + \nu}$, with
\begin{align*}
    C = \frac{\Phi_k'(\sqrt{\xi}) - \gamma\Phi_k(\sqrt{\xi})}{\left(\frac{1}{2}+\nu\right)\xi^{-1/4+\nu/2} - \gamma \xi^{1/4+\nu/2}}.
\end{align*}
Thanks to the recurrence relations satisfied by Laguerre polynomials, $\Phi_k'\left(\sqrt{\xi} \right)$ is expressed as a sum of Laguerre polynomials, multiplied by exponentials and powers of $\xi$, of orders $k$ and $k-1$. The computations presented in the present work can therefore be adapted to this case, using Lemmas \ref{lemma: upper bound eigenf of G low regime} and \ref{lemma: lower bound norm eignf G on R}.} 
\begin{align}\label{eqn: def quasimodes}
    \phi_{\xi,k}(x) = \Phi_k(x) - \frac{x^{1/2 + \nu}}{\xi^{1/4 + \nu/2}}\Phi_k \left( \sqrt{\xi} \right) \in D(\tilde{G}_\xi),
\end{align}
and our pseudo-eigenvalues will be the $\mu_k$'s. \\

We therefore need to estimate from below the $L^2(\Omega_\xi)$-norm of $\phi_{\xi,k}$, and from above the values of $\Phi_k \left( \sqrt{\xi} \right)$. These estimates are given in the following two technical lemmas, that are respectively analogous to \cite[Lemma 2.3 and 2.4]{allonsius2021analysis}.

\begin{lemma}\label{lemma: upper bound eigenf of G low regime}
    Let $\tau \in (0,1)$. There exists $C_1,C_2 > 0$ and $\xi_\tau > 0$, such that for every $\xi \geq \xi_\tau$, for every $k \leq \lfloor \frac{\tau \xi}{4} \rfloor$, we have 
    \begin{align}
        \left| \Phi_k \left( \sqrt{\xi} \right) \right| \leq C_1\frac{\Gamma(1+\nu)}{2}\xi^{-\nu/2 - 1/4} e^{-C_2\xi}.
    \end{align}
\end{lemma}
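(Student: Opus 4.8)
The plan is to use the explicit formula \eqref{eqn: expression eigenfunctions G laguerre} for $\Phi_k$ in terms of the generalized Laguerre polynomial $L_k^{(\nu)}$ evaluated at $x^2 = \xi$, and to exploit the fact that for $k \leq \lfloor \tau\xi/4 \rfloor$ with $\tau < 1$, the argument $\xi$ lies well to the right of the largest zero of $L_k^{(\nu)}$, in the region where the classical bounds for Laguerre polynomials give exponential decay once multiplied against the Gaussian weight $e^{-\xi/2}$. Concretely, from \eqref{eqn: expression eigenfunctions G laguerre},
\begin{align*}
    \left| \Phi_k\left(\sqrt{\xi}\right) \right| = e^{-\xi/2}\,\xi^{1/4+\nu/2}\,\frac{k!}{(1+\nu)_k}\,\left| L_k^{(\nu)}(\xi) \right|.
\end{align*}
Since $(1+\nu)_k = \Gamma(1+\nu+k)/\Gamma(1+\nu)$, the prefactor $k!/(1+\nu)_k = \Gamma(1+\nu)\,k!/\Gamma(1+\nu+k)$, which is $\leq \Gamma(1+\nu)$ for $\nu \geq 0$ (it equals $\Gamma(1+\nu)$ up to a factor bounded by $1$). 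So it remains to control $e^{-\xi/2}\xi^{1/4+\nu/2}|L_k^{(\nu)}(\xi)|$ and absorb everything into $C_1 \frac{\Gamma(1+\nu)}{2} e^{-C_2\xi}$.

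The key step is a pointwise bound on $L_k^{(\nu)}(\xi)$ for $\xi$ large relative to $k$. I would use one of the standard estimates (e.g. the ones recalled in \cite{OlverHandbook2010} or via the explicit sum \eqref{eqn: definition laguerre polynomial}): writing $L_k^{(\nu)}(r) = \sum_{j=0}^k (-1)^j \binom{k+\nu}{k-j} r^j/j!$, the dominant term for large $r$ is the $j=k$ term, of size $r^k/k!$ up to a polynomial-in-$k$ factor, so $|L_k^{(\nu)}(\xi)| \leq C(k)\, \xi^k/k!$ with $C(k)$ at most polynomial in $k$. Then
\begin{align*}
    e^{-\xi/2}\xi^{1/4+\nu/2}|L_k^{(\nu)}(\xi)| \leq C(k)\,\xi^{1/4+\nu/2}\,\frac{\xi^k}{k!}\,e^{-\xi/2}.
\end{align*}
For $k \leq \tau\xi/4$ one has $\xi^k/k! \leq e^{k(1+\log(\xi/k))} \leq e^{k\log(4e/\tau)}$ up to lower-order corrections, hence $\xi^k/k! \leq e^{(\tau/4)\log(4e/\tau)\,\xi}$; choosing $\tau$-dependent, the exponent $\gamma(\tau) := (\tau/4)\log(4e/\tau)$ satisfies $\gamma(\tau) < 1/2$ provided $\tau$ is bounded away from $1$ suitably — and in fact one can check $\gamma(\tau) < 1/2$ for all $\tau \in (0,1)$ since $\gamma$ is increasing with $\gamma(1) = \tfrac14\log(4e) = \tfrac14(1+\log 4) \approx 0.60$... so one must be slightly more careful. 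This is the main obstacle: one needs the Gaussian gain $e^{-\xi/2}$ to strictly beat the combinatorial growth $\xi^k/k!$ on the full range $k \leq \lfloor\tau\xi/4\rfloor$, which forces an honest optimization of $\gamma(\tau)$ and possibly a sharper Laguerre bound (e.g. using that $\xi$ exceeds the largest zero $j_{k,\nu}^2/(4k) \cdot$-type location, where more precise asymptotics show $L_k^{(\nu)}$ stays comparatively small, or using the integral representation / generating function $\sum_k L_k^{(\nu)}(r) t^k = (1-t)^{-1-\nu} e^{-rt/(1-t)}$ to get a clean exponential bound uniform in $k$).

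I would therefore organize the proof as: (i) reduce via \eqref{eqn: expression eigenfunctions G laguerre} and the bound $k!/(1+\nu)_k \leq \Gamma(1+\nu)$ to estimating $e^{-\xi/2}\xi^{1/4+\nu/2}|L_k^{(\nu)}(\xi)|$; (ii) establish, for $k \leq \lfloor\tau\xi/4\rfloor$, a bound $|L_k^{(\nu)}(\xi)| \leq P(\xi)\,e^{\gamma_1(\tau)\,\xi}$ with $\gamma_1(\tau) < 1/2$ and $P$ a polynomial — this is where either the generating-function contour estimate or a direct term-by-term estimate with the optimization of $\gamma(\tau)$ is carried out; (iii) combine to get $|\Phi_k(\sqrt\xi)| \leq \Gamma(1+\nu)\,P(\xi)\,\xi^{1/4+\nu/2}\,e^{-(1/2-\gamma_1(\tau))\xi}$ and absorb the polynomial factor by enlarging $C_1$ and shrinking $C_2$, at the cost of taking $\xi \geq \xi_\tau$ large enough, yielding $|\Phi_k(\sqrt\xi)| \leq C_1\frac{\Gamma(1+\nu)}{2}e^{-C_2\xi}$ with $C_2 = \tfrac12(1/2-\gamma_1(\tau))$, say. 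The uniformity in $k$ is automatic since every estimate above is monotone in $k$ on the relevant range, so the worst case is $k = \lfloor\tau\xi/4\rfloor$.
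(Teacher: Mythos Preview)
Your reduction step (i) is fine, and you correctly isolate the heart of the matter as controlling $e^{-\xi/2}\xi^{1/4+\nu/2}|L_k^{(\nu)}(\xi)|$ uniformly for $k\le\lfloor\tau\xi/4\rfloor$. The gap is in step (ii), and you have in fact already diagnosed it yourself: the term-by-term bound $|L_k^{(\nu)}(\xi)|\le C(k)\,\xi^k/k!$ gives an exponential rate $\gamma(\tau)=(\tau/4)\log(4e/\tau)$ which exceeds $1/2$ once $\tau$ is roughly above $0.77$, so the argument as written does \emph{not} cover the full range $\tau\in(0,1)$ claimed in the lemma. This is not a technicality that can be fixed by a sharper constant: the single term $j=k$ in the sum already has size $\xi^k/k!$, so any bound that ignores the alternating signs in \eqref{eqn: definition laguerre polynomial} is doomed to produce at least this rate. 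The cancellation is essential, and your proposed remedies (generating function contour, integral representation) would, if carried through, amount to redoing a saddle-point analysis equivalent to the Plancherel--Rotach asymptotics; you have not actually performed that analysis, so step (ii) is a genuine hole.

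The paper takes a different route that sidesteps the need for a uniform-in-$k$ Laguerre bound. First, it proves by induction on the three-term recurrence \eqref{eqn: recurrence relation Kummer functions} that $|\Phi_{k+1}(x)|\ge|\Phi_k(x)|$ for $x\ge\sqrt\xi$, so it suffices to treat the single value $k=K:=\lfloor\tau\xi/4\rfloor$. Second, it shows $|\Phi_K|$ is convex and decreasing on $[\sqrt{(1+\delta)\mu_K},\infty)$, hence $|\Phi_K(\sqrt\xi)|\le|\Phi_K(\sqrt{(1+\delta)\mu_K})|$. Third, it evaluates this last quantity via the Plancherel--Rotach formula in the exterior region (Lemma~\ref{lemma: plancherel rotach formula}(ii) with $\cosh^2\theta=1+\delta$), which directly yields a factor $e^{(K+(\nu+1)/2)(2\theta-\sinh 2\theta)}$ with $2\theta-\sinh 2\theta<0$, hence exponential decay in $K\sim\tau\xi/4$. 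This is exactly the sharp tool that captures the cancellation your crude bound discards.
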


\begin{lemma}\label{lemma: lower bound norm eignf G on R}
    For every $\delta \in (0,1)$, there exists $\xi_\delta > 0$, such that for every $\xi \geq \xi_\delta$, for every $k \leq \lfloor \frac{\xi}{4} \rfloor$, we have 
    \begin{align}
        \int_0^{\sqrt{\xi}} \Phi_k(x)^2 \ dx \geq (1-\delta) \frac{\Gamma(\nu + 1)}{2}\frac{k!}{(1+\nu)_k}.
    \end{align}
\end{lemma}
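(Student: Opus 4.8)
The plan is to exploit the fact that $\Phi_k$ is, up to the explicit prefactor $k!/(1+\nu)_k$, an honest normalized eigenfunction of $G$ on the full half-line $\mathbb{R}^+$, so that the truncated integral $\int_0^{\sqrt\xi}\Phi_k^2$ is the full $L^2(\mathbb{R}^+)$-norm minus a tail that we must show is small, uniformly in $k\le\lfloor\xi/4\rfloor$. First I would recall the orthogonality relation for Laguerre polynomials,
\begin{align*}
    \int_0^{+\infty} e^{-r} r^{\nu}\, L_k^{(\nu)}(r)^2 \, dr = \frac{\Gamma(k+\nu+1)}{k!},
\end{align*}
and, via the substitution $r = x^2$, $dr = 2x\,dx$, translate it into
\begin{align*}
    \int_0^{+\infty} \Phi_k(x)^2\, dx
    = \left(\frac{k!}{(1+\nu)_k}\right)^2 \int_0^{+\infty} e^{-x^2} x^{1+2\nu} L_k^{(\nu)}(x^2)^2\, dx
    = \left(\frac{k!}{(1+\nu)_k}\right)^2 \frac{1}{2}\frac{\Gamma(k+\nu+1)}{k!}.
\end{align*}
Using $(1+\nu)_k = \Gamma(k+\nu+1)/\Gamma(1+\nu)$, this collapses to $\int_0^{+\infty}\Phi_k^2 = \frac{\Gamma(1+\nu)}{2}\cdot\frac{k!}{(1+\nu)_k}$, which is exactly the target constant without the $(1-\delta)$ factor. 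Thus the lemma reduces to showing that the tail $\int_{\sqrt\xi}^{+\infty}\Phi_k(x)^2\,dx$ is at most $\delta$ times this value, uniformly for $k\le\lfloor\xi/4\rfloor$.

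For the tail estimate, I would switch back to the variable $r=x^2$ and bound $\int_{\xi}^{+\infty} e^{-r} r^{\nu} L_k^{(\nu)}(r)^2\, dr$ relative to $\Gamma(k+\nu+1)/k!$. The key point is that for $k\le\lfloor\xi/4\rfloor$ the largest zero of $L_k^{(\nu)}$ lies well below $\xi$: the classical bound on the extreme Laguerre zero gives a largest zero of order $4k$ (more precisely $\le 4k + 2\nu + 2 + O(k^{1/3})$ by Laguerre zero asymptotics, comfortably below $\xi$ once $\xi$ is large and $k\le\xi/4$ — this is where the constraint $k\le\lfloor\xi/4\rfloor$ and the freedom to enlarge $\xi_\delta$ enter). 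On the interval $[\xi,\infty)$ the polynomial is therefore monotone and one can use the explicit Perron/Mehler-type asymptotics, or more simply the crude pointwise bound coming from the integral representation, to control $e^{-r/2}r^{\nu/2}|L_k^{(\nu)}(r)|$ on this region; alternatively one can use the known fact that $e^{-r}r^{\nu}L_k^{(\nu)}(r)^2$ decays at least like $e^{-cr}$ past the last turning point with constants uniform in the stated range of $k$. Integrating this exponential tail from $\xi$ to $\infty$ produces a bound of the form $C\,e^{-c\xi}\cdot\Gamma(k+\nu+1)/k!$, which is $\le \delta\cdot\Gamma(k+\nu+1)/k!$ once $\xi\ge\xi_\delta$.

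Subtracting, $\int_0^{\sqrt\xi}\Phi_k^2 = \int_0^{+\infty}\Phi_k^2 - \int_{\sqrt\xi}^{+\infty}\Phi_k^2 \ge (1-\delta)\frac{\Gamma(\nu+1)}{2}\frac{k!}{(1+\nu)_k}$, as claimed. The main obstacle is the uniformity in $k$ of the tail bound: a naive pointwise estimate on $L_k^{(\nu)}$ degrades as $k$ grows, so one must use that past the last zero the relevant weighted Laguerre function is in its exponentially decaying regime and that, since $k\le\xi/4$, the point $r=\xi$ sits a definite fraction beyond the oscillatory region — this is precisely the place where the hypothesis $k\le\lfloor\xi/4\rfloor$ is used and where care is needed to keep all constants independent of $k$. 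Everything else is the routine Laguerre normalization computation above.
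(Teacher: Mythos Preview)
Your strategy (compute the full norm exactly, then bound the tail $\int_{\sqrt\xi}^\infty\Phi_k^2$) is genuinely different from the paper's. The paper never looks at the tail: it bounds $\int_0^{\sqrt\xi}\Phi_k^2$ from below directly by restricting to the oscillatory window $x=\sqrt{\mu_k}\cos\theta$, $\theta\in[\epsilon,\pi/2-\epsilon]$, plugging in the Plancherel--Rotach formula (the oscillatory case of Lemma~\ref{lemma: plancherel rotach formula}), and using that the $\sin^2$ term averages to $1/2$; Stirling then turns the prefactor $k^{\nu}(k!/(1+\nu)_k)^2$ into $\Gamma(1+\nu)\,k!/(1+\nu)_k$. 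Finitely many small $k$ are handled by dominated convergence. Your normalization computation is of course correct, and your reduction to a tail estimate is a clean alternative viewpoint.

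There is, however, a real gap in your tail argument at the top of the range. You assert that for $k\le\lfloor\xi/4\rfloor$ the largest Laguerre zero, and the turning point, lie ``well below $\xi$'' so that $r=\xi$ sits ``a definite fraction beyond the oscillatory region''. This is false at the endpoint: for $k=\lfloor\xi/4\rfloor$ one has $\mu_k=4k+2(1+\nu)\in[\xi-2+2(1+\nu),\,\xi+2(1+\nu)]$, so the turning point is essentially \emph{at} $\xi$, not a fixed fraction before it, and the largest zero bound $4k+2\nu+2+O(k^{1/3})$ you quote can likewise exceed $\xi$. Consequently the Perron-type exponential decay you invoke does not kick in at $r=\xi$, and no bound of the form $Ce^{-c\xi}$ uniform in $k\le\lfloor\xi/4\rfloor$ is available for the tail; at the edge one is in the Airy regime, where the decay is only like $k^{-1/3}$ in the relevant sense.

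Your approach can be salvaged in two ways. Either (i) prove the weaker (but sufficient) statement that the normalized mass in $[\mu_k,\infty)$ is $o(1)$ as $k\to\infty$ --- this is true and follows, for instance, from the same oscillatory Plancherel--Rotach computation the paper does, which shows almost all the mass lives in $[0,\mu_k\cos^2\epsilon]$ --- or (ii) observe that downstream the lemma is only applied for $k\le\lfloor\tau\xi/4\rfloor$ with $\tau<1$, in which case $\mu_k\le\tau\xi+2(1+\nu)$ really is a fixed fraction below $\xi$ and your exponential-tail argument goes through as written.
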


Observe that we have not normalized these eigenfunctions yet, as this is not a necessity for the proofs, and it does not simplify the presentation, nor the computations. However, let us still give the norm of $\Phi_k$ as we may see it appear in the proofs. We know from \cite[(5.1.1)]{szeg1939orthogonal} that
\begin{align*}
    \int_0^\infty e^{-r}r^\nu L_k^{(\nu)}(r)^2 \ dr = \Gamma(\nu + 1)\binom{k+\nu}{k} = \frac{\Gamma(\nu + k + 1)}{k!}.
\end{align*}
Making the change of variable $r = x^2$, we readily obtain that 
\begin{align}
     \int_0^\infty \Phi_k(x)^2 \ dx = \frac{1}{2}\left( \frac{k!}{(1+\nu)_k} \right)^2 \frac{\Gamma(\nu + k + 1)}{k!} = \frac{1}{2}\frac{k!}{(1+\nu)_k^2}  \Gamma(\nu + k + 1).
\end{align}
From \cite[(5.2.5)]{OlverHandbook2010}, we have 
\begin{align*}
    \frac{\Gamma(\nu + k + 1)}{(1+\nu)_k} = \Gamma(1+\nu) > 1, 
\end{align*}
which is $\nu!$ if $\nu$ is an integer. It follows that 
\begin{align}\label{eqn: norm eigenfunction on R}
    \|\Phi_k(x)\|_{L^2(\mathbb{R}^+)}^2 = \frac{\Gamma(\nu + 1)}{2}\frac{k!}{(1+\nu)_k} .
\end{align} 

Observe that this is constant with respect to $k$ when $\nu = 0$. \\

We first prove Theorem \ref{thm: upper bound eigenvalues lower range}, and then the two technical Lemmas. 

\begin{proof}[Proof of Theorem \ref{thm: upper bound eigenvalues lower range}]
    Let $\tau \in (0,1)$. We use in this proof the notation $C>0$ for a universal constant that may differ from line to line. Recall that we want to use Lemma \ref{lemma: spectral distance} with 
    \begin{align*}
        \phi_{\xi,k}(x) = \Phi_k(x) - \frac{x^{1/2+\nu}}{\xi^{1/4 + \nu/2}}\Phi_k \left( \sqrt{\xi} \right). 
    \end{align*}
    Observe that 
    \begin{align*}
        \left(-\partial_x^2 + x^2 + \frac{\nu^2 - 1/4}{x^2} \right)(x^{1/2+\nu}) = x^{5/2 + \nu}.
    \end{align*}
    Thus, we have
    \begin{align*}
        \tilde{G}_\xi\phi_{\xi,k}(x) = \mu_k \Phi_k(x) - \frac{\Phi_k \left( \sqrt{\xi} \right)}{\xi^{1/4 + \nu/2}}x^{5/2 + \nu}.
    \end{align*}
    Applying Lemma \ref{lemma: spectral distance}, we obtain 
    \begin{align}\label{eqn: application spectral distance to quasimode}
        \operatorname{dist}(\mu_k, \sigma(\tilde{G}_\xi)) &\leq \frac{\|(\tilde{G}_\xi-\mu_k)\phi_{\xi,k}\|}{\|\phi_{\xi,k}\|} \leq \frac{1}{\left\|\phi_{\xi,k} \right\|}  \left| \frac{\Phi_k \left( \sqrt{\xi} \right)}{\xi^{1/4 + \nu/2}} \right| \left( \left\| x^{5/2 + \nu} \right\| + \mu_k \left\| x^{1/2 + \nu} \right\| \right),
    \end{align}
    where we consider the $L^2(\Omega_\xi)$-norm.\\

    We compute, using that $\mu_k = 4k + 2(1+\nu)\leq \tau \xi + 2(1+\nu) \leq  \xi$ for large $\xi$,
    \begin{align}\label{eqn: upper bound boundary value quasimode2}
        \left\| x^{5/2 + \nu} \right\| + \mu_k \left\| x^{1/2 + \nu} \right\| = \frac{\xi^{(3+\nu)/2}}{\sqrt{6+2\nu}} + (4k+2(1+\nu)) \frac{\xi^{(1+\nu)/2}}{\sqrt{2(1+\nu)}} \leq C\xi^{(3+\nu)/2}, 
    \end{align}
    for some $C> 0$. \\
    
    Using Lemma \ref{lemma: upper bound eigenf of G low regime}, we have, for $\xi$ sufficiently large depending on $\tau \in (0,1)$,
    \begin{align}\label{eqn: upper bound boundary value quasimode1}
        \left| \frac{\Phi_k \left( \sqrt{\xi} \right)}{\xi^{1/4 + \nu/2}}\right| \leq C_1 \frac{\Gamma(1+\nu)}{2} \xi^{-1/2-\nu} e^{-C_2\xi}. 
    \end{align}
    
    On the other hand, using Lemmas \ref{lemma: upper bound eigenf of G low regime} and \ref{lemma: lower bound norm eignf G on R}, for $\xi$ large enough depending on $\delta \in (0,1)$,
    \begin{align*}
        \left\|\phi_{\xi,k} \right\|^2 &= \int_0^{\sqrt{\xi}} \left(\Phi_k(x) - \frac{x^{1/2+\nu}}{\xi^{1/4 + \nu/2}}\Phi_k \left( \sqrt{\xi} \right)\right)^2 \ dx \\
        &\geq \int_0^{\sqrt{\xi}} \frac{1}{2}\Phi_k(x)^2 - \frac{x^{1 +2\nu}}{\xi^{1/2 + \nu}}\Phi_k \left( \sqrt{\xi} \right)^2 \ dx \\
        &\geq  (1-\delta) \frac{\Gamma(\nu + 1)}{4}\frac{k!}{(1+\nu)_k} - C_1^2\frac{\xi^{-\nu}}{2+2\nu} \left( \frac{\Gamma(1+\nu)}{4} \right)^2 e^{-2C_2 \xi}.
    \end{align*}
    If $\nu = 0$, the first term in the right-hand side is constant, and we obtain, for any $C \in (0,1)$ arbitrarly close to $1$, if $\xi$ is taken large enough, 
    \begin{align}\label{eqn: lower bound norm quasimode nu = 0}
        \left\|\phi_{\xi,k} \right\|^2 \geq C\frac{(1-\delta)}{4}.
    \end{align}
    
    If $\nu > 0$, then the first term in the right-hand side is strictly decreasing with respect to $k$, and we obtain
    \begin{align}
        \left\|\phi_{\xi,k} \right\|^2 \geq (1-\delta) \frac{\Gamma(\nu + 1)}{4}\frac{\left\lfloor \frac{\tau \xi}{4} \right\rfloor!}{(1+\nu)_{\left\lfloor \frac{\tau \xi}{4} \right\rfloor}} - C_1^2\frac{\xi^{-\nu}}{2+2\nu} \left( \frac{\Gamma(1+\nu)}{4} \right)^2 e^{-2C_2 \xi}.
    \end{align}
    Using Stirling approximation, we have, as $k \rightarrow + \infty$, 
    \begin{align*}
        \frac{k!}{(1+\nu)_k} &= \frac{k!}{\Gamma(1+\nu +k)}\Gamma(1+\nu) \sim \frac{\sqrt{2k\pi}k^k e^{-k}}{\sqrt{2\pi} k^{1/2 +\nu + k}e^{-k}} = k^{-\nu}\Gamma(1+\nu).
    \end{align*}
    Thus, for $\xi$ sufficiently large, for some $C > 0$, 
    \begin{align}\label{eqn: lower bound norm quasimode nu > 0}
        \left\|\phi_{\xi,k} \right\|^2 \geq C(1-\delta) \xi^{-\nu}
    \end{align}
    Hence, under any consideration for $\nu \geq 0$, for some $C > 0$ that differs with the preceding constants, for every $\xi$ taken large enough, we obtain from \eqref{eqn: application spectral distance to quasimode}, \eqref{eqn: upper bound boundary value quasimode2}, \eqref{eqn: upper bound boundary value quasimode1}, \eqref{eqn: lower bound norm quasimode nu = 0} and \eqref{eqn: lower bound norm quasimode nu > 0},  
    \begin{align}\label{eqn: application spectral distance to quasimode 2}
        \operatorname{dist}(\mu_k, \sigma(\tilde{G}_\xi)) \leq C \xi e^{-C_2\xi}.
    \end{align}
    
    Hence, for every $k \leq \left\lfloor \frac{\tau \xi}{4} \right\rfloor$, we have, for $\xi$ large enough, at least one eigenvalue, that we denote by $E_{\xi,k}$, of $\tilde{G}_\xi$ that is exponentially close to $\mu_k$. To conclude the proof, we need to make sure that this eigenvalue is the only one close to $\mu_k$, and that we have not forgotten any eigenvalue. In other words, that $E_{\xi,k}$ is in fact the $k$-th eigenvalue of $\tilde{G}_\xi$, \textit{i.e.} $E_{\xi,k} = \tilde{\lambda}_{\xi,k}$. This can be seen directly from Theorem \ref{thm: lower bound eigenvalues}. \\

    Indeed, we first look at $k=0$. We see from the lower bound \eqref{eqn: lower bound eigenvalues 1} given in Theorem \ref{thm: lower bound eigenvalues}, that $\tilde{\lambda}_{\xi,0} > \mu_0$. Hence, for $k=0$, an eigenvalue that may satisfy $|\lambda - \mu_0| \leq C \xi e^{-C_2 \xi}$ is $\tilde{\lambda}_{\xi,0}$. Since $\tilde{\lambda}_{\xi,1} > \mu_1 = \mu_0 + 4$, we see that it is the only one for $\xi$ large, at least such that $C\xi e^{-C_2 \xi} < 4$, and so $E_{\xi,0} = \tilde{\lambda}_{\xi,0}$. Iterating for every $k$ under consideration, we obtain that $E_{\xi,k} = \tilde{\lambda}_{\xi,k}$. This concludes the proof.
\end{proof}

\subsection{Proof of Theorem \ref{thm: upper bound eigenvalues lower range fixed k}}\label{section: proof theorem upper bound fixed k}

We follow the same strategy as for Theorem \ref{thm: upper bound eigenvalues lower range}. We use Lemma \ref{lemma: spectral distance} with the quasi-modes $\phi_{\xi,k}$ given by \eqref{eqn: def quasimodes}, with fixed $k$. We therefore obtain 
    \begin{align}\label{eqn: application spectral distance to quasimode2}
        \operatorname{dist}(\mu_k, \sigma(\tilde{G}_\xi)) &\leq \frac{\|(\tilde{G}_\xi-\mu_k)\phi_{\xi,k}\|}{\|\phi_{\xi,k}\|} \leq \frac{1}{\left\|\phi_{\xi,k} \right\|}  \left| \frac{\Phi_k \left( \sqrt{\xi} \right)}{\xi^{1/4 + \nu/2}}\right| \left( \left\| x^{5/2 + \nu} \right\| + \mu_k \left\| x^{1/2 + \nu} \right\| \right),
    \end{align}
where we consider the $L^2\left(0,\sqrt{\xi} \right)$-norm.\\

First we have, using \eqref{eqn: expression eigenfunctions G laguerre},
\begin{align*}
    \left| \frac{\Phi_k \left( \sqrt{\xi} \right)}{\xi^{1/4 + \nu/2}}\right| = e^{-\xi/2}\frac{k!}{(1+\nu)_k}L_k^{(\nu)}\left(\xi \right).
\end{align*}

The term $L_k^{(\nu)}(x)$, defined in \eqref{eqn: definition laguerre polynomial}, is polynomial, with its monomial of highest degree given by $(-1)^k x^k/k!$. Thus, letting $\delta > 0$, there exists $\xi_\delta > 0$ such that for every $\xi \geq \xi_\delta$ we have

\begin{align}\label{eqn: proof fixed k 1}
    \left| \frac{\Phi_k \left( \sqrt{\xi} \right)}{\xi^{1/4 + \nu/2}}\right| \leq (1+\delta) e^{-\xi/2}\frac{\xi^k}{(1+\nu)_k}.
\end{align}

Next we compute, recalling that $\mu_k = 4k + 2(1+\nu)$,
\begin{align}\label{eqn: proof fixed k 2}
    \left\| x^{5/2 + \nu} \right\| + \mu_k \left\| x^{1/2 + \nu} \right\| = \frac{\xi^{(3+\nu)/2}}{\sqrt{6+2\nu}} + (4k+2(1+\nu)) \frac{\xi^{(1+\nu)/2}}{\sqrt{2(1+\nu)}}. 
\end{align}

Finally, we bound from below the term $\| \phi_{\xi,k} \|$ using Lemma \ref{lemma: lower bound norm eignf G on R}, since it cannot be made more precise in the present case of $k$ fixed. We have, for $\xi$ possibly larger than in \eqref{eqn: proof fixed k 1}, 
\begin{align}\label{eqn: proof fixed k 3}
    \| \phi_{\xi,k} \| \geq (1-\delta) \sqrt{\frac{\Gamma(\nu + 1)}{2}\frac{k!}{(1+\nu)_k}}.
\end{align}

Thus, combining \eqref{eqn: application spectral distance to quasimode2}, \eqref{eqn: proof fixed k 1}, \eqref{eqn: proof fixed k 2} and \eqref{eqn: proof fixed k 3}, we obtain, for every $\delta \in (0,1)$, for every $\xi$ sufficiently large,  
\begin{align}
    \operatorname{dist}(\mu_k, \sigma(\tilde{G}_\xi)) &\leq \frac{1+\delta}{1-\delta} \sqrt{\frac{2}{\Gamma(1+\nu)}\frac{(1+\nu)_k}{k!}}\xi^{k + (1+\nu)/2} \left( \frac{\xi}{\sqrt{6+2\nu}} +  \frac{4k+2(1+\nu)}{\sqrt{2(1+\nu)}} \right) e^{-\xi/2}.
\end{align}

This concludes the proof of Theorem \ref{thm: upper bound eigenvalues lower range fixed k} by taking $\epsilon > 0$ such that $1+\epsilon = (1+\delta)/(1-\delta)$.

\subsection{Proof of technical lemmas}\label{section: proof technical lemmas}

This subsection is dedicated to the proofs of Lemmas \ref{lemma: upper bound eigenf of G low regime} and \ref{lemma: lower bound norm eignf G on R}. To prove these Lemmas, we will need the following Plancherel-Rotach type formulas from \cite[Th. 8.22.8]{szeg1939orthogonal}.
\begin{lemma}\label{lemma: plancherel rotach formula} 
    Let $\alpha$ be arbitrary and real, $\epsilon$ and $\omega$ fixed positive numbers. We have 
    \begin{enumerate}[label=(\roman*), ref = \ref{lemma: plancherel rotach formula}(\roman*)]
        \item \label{lemma: plancherel rotach formula 1} for $r = (4n + 2\alpha + 2)\cos(\theta)^2$, $0 < \epsilon_0 \leq \theta \leq \pi/2 - \epsilon n^{-1}$, 
    \begin{align}
        e^{-r/2}L_n^{(\alpha)}(r) = (-1)^n n^{\alpha/2 - 1/4} \frac{\sin\left[\left(n+\frac{\alpha + 1}{2}\right)(\sin(2\theta) - 2\theta) + \frac{3\pi}{4}\right] + (nr)^{-1/2}O(1) }{(\pi \sin(\theta))^{1/2}r^{\alpha/2 + 1/4}};
    \end{align}
    \item \label{lemma: plancherel rotach formula 2} for $r = (4n + 2\alpha + 2)\cosh(\theta)^2$, $0 < \epsilon \leq \theta \leq \omega$, 
    \begin{align}
        e^{-r/2}L_n^{\alpha}(r) = (-1)^n \frac{n^{\alpha/2 - 1/4}}{2} \frac{e^{(n + (\alpha+1)/2)(2\theta - \sinh(2\theta))}}{(\pi \sinh(\theta))^{1/2}r^{\nu/2 + 1/4}}\left( 1 + O\left(\frac{1}{n}\right) \right).
    \end{align}
    \end{enumerate}
    The big $O$ notation holds uniformly with respect to $\theta$.
\end{lemma}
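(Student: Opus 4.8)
This is the classical Plancherel--Rotach asymptotics for the Laguerre polynomials, so in the paper one simply cites \cite[Th.~8.22.8]{szeg1939orthogonal}. If one wanted to reconstruct a proof, the most transparent route is a Liouville--Green (WKB) analysis of the second-order ODE satisfied by $L_n^{(\alpha)}$. The plan would be as follows. First, pass to Whittaker normal form: by the Kummer--Laguerre relation \cite[(13.6.19)]{OlverHandbook2010} together with the definition \eqref{eqn: definition whittaker function} of the Whittaker function, the function
\begin{align*}
    w_n(r) := e^{-r/2}\,r^{(\alpha+1)/2}L_n^{(\alpha)}(r) = \frac{n!}{(1+\alpha)_n}\,M_{\kappa_n,\mu}(r), \qquad \kappa_n := n + \tfrac{\alpha+1}{2}, \quad \mu := \tfrac{\alpha}{2},
\end{align*}
solves the Whittaker equation \eqref{eqn: whittaker equation}, that is $w_n'' + Q_n w_n = 0$ with $Q_n(r) = -\tfrac14 + \tfrac{\kappa_n}{r} + \tfrac{1-\alpha^2}{4r^2}$, and one recovers $e^{-r/2}L_n^{(\alpha)}(r) = r^{-(\alpha+1)/2}w_n(r)$. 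For large $n$ the coefficient $Q_n$ has its principal turning point at $r_n^\star := 4\kappa_n = 4n+2\alpha+2$, and the two parametrizations $r = r_n^\star\cos^2\theta$ and $r = r_n^\star\cosh^2\theta$ place us strictly inside, respectively strictly to the right of, that turning point --- exactly the two regimes of the Lemma.

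Next, apply the Liouville--Green approximation with explicit error control (the error-control-function version, as in Chapter~VIII of \cite{szeg1939orthogonal} or in Olver's treatise). In the exponential regime $r > r_n^\star$, the polynomial $L_n^{(\alpha)}$ corresponds to the solution recessive at $+\infty$ (since $\tfrac12+\mu-\kappa_n = -n$ is a nonpositive integer, $e^{-r/2}L_n^{(\alpha)}(r)\to 0$), hence to $|Q_n(r)|^{-1/4}\exp\!\big(-\!\int_{r_n^\star}^r\!\sqrt{|Q_n|}\,\big)$; the substitution $t = r_n^\star\cosh^2\phi$ evaluates the exponent to leading order as $\kappa_n(2\theta-\sinh 2\theta) < 0$, and the relative error is $O(1/n)$ since $\theta\ge\epsilon$ keeps us a fixed distance from $r_n^\star$. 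Connecting this recessive solution leftward across $r_n^\star$ by the standard Airy connection formula produces, in the oscillatory regime, $Q_n(r)^{-1/4}\cos\!\big(\!\int_r^{r_n^\star}\!\sqrt{Q_n} - \tfrac\pi4\big)$; the substitution $t = r_n^\star\cos^2\phi$ gives $\int_r^{r_n^\star}\sqrt{Q_n} = \kappa_n(2\theta-\sin 2\theta)$ to leading order, and the identity $\cos(x-\tfrac\pi4) = \sin(\tfrac{3\pi}4 - x)$ together with $2\theta-\sin2\theta \mapsto -(\sin 2\theta - 2\theta)$ turns this into precisely the $\sin[\,\cdot\, + \tfrac{3\pi}4]$ of formula (i). Finally, undoing $w_n\mapsto e^{-r/2}L_n^{(\alpha)}$ converts the amplitude $Q_n(r)^{-1/4}r^{-(\alpha+1)/2}$ into $(\sin\theta)^{-1/2}r^{-\alpha/2-1/4}$ up to a power of $\kappa_n\sim n$ and an $n$-independent constant; that constant and the overall power $n^{\alpha/2-1/4}$ (and the factor $\tfrac12$ in (ii)) are fixed by inserting the approximation into the exact normalization $\int_0^\infty e^{-r}r^\alpha L_n^{(\alpha)}(r)^2\,dr = \Gamma(n+\alpha+1)/n!$ --- the integral is dominated by the oscillatory band, where the mean of $\cos^2$ is $\tfrac12$ --- together with Stirling on the $\Gamma$-quotient.

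I expect the main obstacle to be the uniform control of the error near the two ends of the $\theta$-range, rather than the bookkeeping of constants. Near $\theta = \epsilon_0$ (that is, $r$ approaching $r_n^\star$) the plain Liouville--Green expansion degenerates and must be replaced by an Airy comparison, which is exactly why formula (i) is cut off at $\theta\ge\epsilon_0$; making the connection formula quantitative, with an error that does not blow up as $\theta\downarrow\epsilon_0$, is the delicate point. Symmetrically, near $\theta = \pi/2 - \epsilon n^{-1}$ one has $r\sim 1/n$, where the $\tfrac{1-\alpha^2}{4r^2}$ term in $Q_n$ ceases to be negligible against $\tfrac{\kappa_n}{r}$ and the behaviour crosses over to the Bessel-type regime of the regular singular point $r = 0$; the cutoff $\theta\le \pi/2-\epsilon n^{-1}$ keeps $nr$ bounded below, which is precisely what is needed for the error $(nr)^{-1/2}O(1)$ in (i) to stay controlled. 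An alternative, closer to Szeg\H{o}'s own method, is to run the steepest-descent method on the contour integral for $L_n^{(\alpha)}$ coming from its generating function $(1-t)^{-\alpha-1}\exp(-rt/(1-t))$: the two saddle points are complex conjugate for $r < r_n^\star$ (yielding the oscillation), real for $r > r_n^\star$ (yielding the decay), and coalesce at $r = r_n^\star$ (Airy regime); this makes the constants very transparent but faces the same turning-point uniformity issue.
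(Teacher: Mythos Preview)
Your identification is correct: the paper does not prove this lemma but simply quotes it from \cite[Th.~8.22.8]{szeg1939orthogonal}, exactly as you say in your first sentence. The WKB/steepest-descent sketch you add is extra background and is not needed for the paper, though it is a reasonable outline of how Szeg\H{o}'s result is obtained.
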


\begin{proof}[Proof of Lemma \ref{lemma: upper bound eigenf of G low regime}]
    Let $\tau \in (0,1)$. Introduce $\delta > 0$ a small, but fixed, parameter, such that $1+\delta < 1/\tau$, and let $\xi_\tau > 0$ be such that for every $\xi \geq \xi_\tau$ we have
    \begin{align}\label{eqn: assumption 1 xi proof of lemma 4.5}
        &\frac{\mu_{\lfloor \frac{\tau \xi }{4} \rfloor}}{2} + \left(\frac{\mu_{\lfloor \frac{\tau \xi }{4} \rfloor}^2}{4} + 1/4 - \nu^2 \right)^{1/2} < (1+\delta)\mu_{\lfloor \frac{\tau \xi }{4} \rfloor} < \xi,
    \end{align}
    and
    \begin{align}\label{eqn: assumption 2 xi proof of lemma 4.5}
        \left(\frac{1}{\tau} - 1 \right) \mu_{\lfloor \frac{\tau \xi }{4} \rfloor} + \tau\frac{\nu^2 - 1/4}{\mu_{\lfloor \frac{\tau \xi }{4} \rfloor}} \geq 0, \quad \text{if } \nu \in [0,1/2),
    \end{align}
    where we recall that $\mu_k = 4k + 2(1+\nu)$ is the $k$-th eigenvalue of $G$ given in Proposition \ref{prop: eigv and eigenf of G on R+}.\\
    
    Following the proof of \cite[Lemma 2.4]{allonsius2021analysis}, we first want to show that for every $\xi \geq \xi_\tau$, $x \geq \sqrt{\xi}$, and $k \leq \lfloor \frac{\tau \xi }{4} \rfloor$, we have 
    \begin{align}\label{eqn: inequality induction eigenf low regime}
        |\Phi_{k+1}(x)| \geq |\Phi_k(x)|,
    \end{align}
    and we prove it by induction. Then, we will bound from above $\left|\Phi_{\lfloor \frac{\tau \xi }{4} \rfloor}\left(\sqrt{\xi}\right) \right|$ to conclude the proof. \\    
    
    We have that \eqref{eqn: inequality induction eigenf low regime} holds for $k = 1$, since 
    \begin{align*}
        \Phi_0(x) &= e^{-x^2/2}x^{1/2 + \nu} ,\\
        \Phi_1(x) &= e^{-x^2/2}x^{1/2 + \nu} \left( 1 - \frac{x^2}{1+\nu} \right).
     \end{align*}
    Assume now that  \eqref{eqn: inequality induction eigenf low regime} holds with $k$ and $k-1$. We first remark that we have the recurrence relation \cite[(13.3.1)]{OlverHandbook2010} on the confluent hypergeometric functions 
    \begin{align}\label{eqn: recurrence relation Kummer functions}
        (b - a)M(a - 1, b, z) + (2a - b + z)M(a, b, z) - aM(a + 1, b, z) = 0.
    \end{align}
    Using the above recurrence relation with $a = - k$, $b = 1 + \nu$, and $z = x^2$, we obtain
    \begin{align*}
        \Phi_{k+1}(x) &= e^{-x^2/2}x^{1/2 + \nu}M(-(k+1), 1 + \nu, x^2) \\
        &= e^{-x^2/2}x^{1/2 + \nu} \frac{(2k + 1 + \nu - x^2)M(-k, 1+\nu,x^2) - kM(-(k-1),1+\nu,x^2)}{1+\nu + k} \\
        &= \frac{2k + 1 + \nu - x^2}{1+\nu + k}\Phi_k(x) - \frac{k}{1+\nu + k}\Phi_{k-1}(x).
    \end{align*}
    Now, we observe that the sign of $\Phi_k(x)$ is given by the sign of $M(-k,1+\nu,x^2)$. That is, by the sign of $L_k^{(\nu)}(x^2)$ from the identity \eqref{eqn: expression eigenfunctions G laguerre}. The sign of $\Phi_k$ is therefore constant from the moment that $x^2$ is greater than the largest root of $L_k^{(\nu)}$ on $(0,+\infty)$. Denote the largest root of the Laguerre polynomial $L_k^{(\nu)}$ by $x_k$, for $0 \leq k \leq \lfloor \frac{\tau \xi }{4} \rfloor$. We have that 
    \begin{align}\label{eqn: ordering roots laguerre}
        x_0 < x_1 < ... < x_{\lfloor \frac{\tau \xi }{4} \rfloor},
    \end{align}
    using for example the interlacing properties of the Laguerre polynomials (\cite[Chap. 3]{szeg1939orthogonal}). Moreover, since the sign of $L_k^{(\nu)}(x)$ for $x > x_k$ is given by the sign of its monomial of largest degree, we obtain by definition of the Laguerre polynomials \eqref{eqn: definition laguerre polynomial} that for every $0 \leq k \leq \lfloor \frac{\tau \xi }{4} \rfloor$, for every $x > x_{\lfloor \frac{\tau \xi }{4} \rfloor}$,
    \begin{align}
        \operatorname{sgn}\left( L_k^{(\nu)} \right) (x) = \left\{
        \begin{array}{cc}
             1 & \text{if $k$ is even,}\\
             -1 & \text{if $k$ is odd.}
        \end{array} \right.
    \end{align}
    From \cite[Theorem 6.31.2]{szeg1939orthogonal}, we have $x_k < 2k + 1 + \nu + ((2k + 1 + \nu)^2 + 1/4 - \nu^2)^{1/2}$. Taking into account the definition of $\mu_k$ given in Proposition \ref{prop: eigv and eigenf of G on R+}, this reads 
    \begin{align}
        x_k < \frac{\mu_k}{2} + \left(\frac{\mu_k^2}{4} + 1/4 - \nu^2 \right)^{1/2}.
    \end{align}
    Hence, from \eqref{eqn: ordering roots laguerre}, and by assumption \eqref{eqn: assumption 1 xi proof of lemma 4.5}, for every $\xi \geq \xi_\tau$, we have
    \begin{align}\label{eqn: upper bound largest root laguerre lower regime}
        x_{\lfloor \frac{\tau \xi }{4} \rfloor} < (1+\delta)\mu_{\lfloor \frac{\tau \xi }{4} \rfloor}.
    \end{align}
    Thus, from the expression \eqref{eqn: expression eigenfunctions G laguerre} of $\Phi_k$ in term of the Laguerre polynomials, and using \eqref{eqn: ordering roots laguerre} and \eqref{eqn: upper bound largest root laguerre lower regime}, we obtain that the largest root of $\Phi_k$, that we now denote by $r_k$, satisfies
    \begin{align}\label{eqn: largest root eignfnct bound above}
        r_k < \sqrt{(1+\delta)\mu_{\lfloor \frac{\tau \xi }{4} \rfloor}}, \quad \text{for every } 0 \leq k \leq \left\lfloor \frac{\tau \xi }{4} \right\rfloor. 
    \end{align}
    Moreover, from assumption \eqref{eqn: assumption 1 xi proof of lemma 4.5} we also have $\sqrt{\xi} > \sqrt{(1+\delta)\mu_{\lfloor \frac{\tau \xi }{4} \rfloor}}$. It follows that for every $x \geq \sqrt{\xi}$, $\xi \geq \xi_\tau$, for every $0 \leq k \leq \lfloor \frac{\tau \xi }{4} \rfloor$, we have 
    \begin{align}
        \operatorname{sgn}\left( \Phi_k \right) (x) = \left\{
        \begin{array}{cl}
             1 & \text{if $k$ is even,}\\
             -1 & \text{if $k$ is odd.}
        \end{array} \right.
    \end{align}
    Assume now that $k + 1$ is even. Hence, $\Phi_{k+1}(x)$ and $\Phi_{k-1}(x)$ are positive. The recurrence relation gives in this case, using the recurrence assumption, 
    \begin{align*}
        \Phi_{k+1}(x) &= \frac{2k + 1 + \nu - x^2}{1+\nu + k}\Phi_k(x) - \frac{k}{1+\nu + k}\Phi_{k-1}(x) \\
        &\geq \frac{2k + 1 + \nu - x^2}{1+\nu + k}\Phi_k(x) - \frac{k}{1+\nu + k}\left|\Phi_{k}(x)\right| \\
        &=  \frac{3k + 1 + \nu - x^2}{1+\nu + k}\Phi_k(x).
    \end{align*}
    Now, both $\Phi_k(x)$ and $3k+1+\nu-x^2$ are negative if $x^2 \geq \xi$. Indeed, by assumption \eqref{eqn: assumption 1 xi proof of lemma 4.5},
    \begin{align*}
        \xi \geq \mu_{\lfloor \frac{\tau \xi }{4} \rfloor} \geq \mu_k \geq 3k + 1 + \nu, \quad \text{for every } 0 \leq k \leq \left\lfloor \frac{\tau \xi }{4} \right\rfloor. 
    \end{align*}
    Also, observe that
    \begin{align}\label{eqn: inequality for recurrence lemma 4.6}
        \frac{3k + 1 + \nu - x^2}{1+\nu + k} \leq -1, 
    \end{align}
    if and only if 
    \begin{align*}
        x^2 \geq \mu_k,
    \end{align*}
    which holds true for every $k \leq \lfloor \frac{\tau \xi }{4} \rfloor$ by assumption \eqref{eqn: assumption 1 xi proof of lemma 4.5} since $x^2 \geq \xi$. Hence, it follows that 
    \begin{align}
        \Phi_{k+1}(x) \geq |\Phi_k(x)|.
    \end{align}
    
    Assume now that $k+1$ is odd. Then, $\Phi_{k+1}(x)$ and $\Phi_{k-1}(x)$ are negative, $\Phi_k(x)$ is positive. Since $2k + 1 + \nu - x^2$ is negative, we get that 
    \begin{align*}
        \frac{2k + 1 + \nu - x^2}{1 + \nu + k}\Phi_k(x) &< 0, \\
        -\frac{k}{1 + \nu + k}\Phi_{k-1}(x) &> 0.
    \end{align*}
    Using again the recurrence assumption and recurrence relation, we get 
    \begin{align*}
        \Phi_{k+1}(x) &= \frac{2k + 1 + \nu - x^2}{1+\nu + k}\Phi_k(x) - \frac{k}{1+\nu + k}\Phi_{k-1}(x) \\
        &\leq \frac{3k + 1 + \nu - x^2}{1+\nu + k}\Phi_k(x).
    \end{align*}
    Using once again \eqref{eqn: inequality for recurrence lemma 4.6},
    \begin{align*}
        \Phi_{k+1}(x) \leq -\Phi_k(x) \leq 0,
    \end{align*}
    which translates by 
    \begin{align}
        |\Phi_{k+1}(x)| = - \Phi_{k+1}(x) \geq \Phi_k(x) = |\Phi_k(x)|.
    \end{align}
    This ends the proof of inequality \eqref{eqn: inequality induction eigenf low regime}.\\
    
    In view of \eqref{eqn: inequality induction eigenf low regime}, it is sufficient to bound from above $\left|\Phi_{\lfloor \frac{\tau \xi }{4} \rfloor}\left(\sqrt{\xi}\right) \right|$. Let us first prove that there exists $C_1,C_2 > 0$ such that 
    \begin{align}\label{eqn: upper bound eignf largest k}
        \left|\Phi_{\lfloor \frac{\tau \xi }{4} \rfloor}\left(\sqrt{\xi}\right) \right| \leq \frac{C_1}{2}{\left\lfloor \frac{\tau \xi }{4} \right\rfloor}^{\nu/2 - 1/4}\frac{\lfloor \frac{\tau \xi }{4} \rfloor!}{(1+\nu)_{\lfloor \frac{\tau \xi }{4} \rfloor}} e^{-C_2\xi }.
    \end{align}
    
    For simplicity of the presentation of the end of the proof, let us set 
    \begin{align}\label{eqn: definition K proof lemma 4.5}
        K := \left\lfloor \frac{\tau \xi }{4} \right\rfloor.
    \end{align}
    Observe that if $x \geq \sqrt{(1+\delta)\mu_K}$, then $x^2 + \frac{\nu^2 - 1/4}{x^2} - \mu_K \geq 0$ by assumption \eqref{eqn: assumption 1 xi proof of lemma 4.5}. Indeed, the largest root of $x \mapsto x^2 + \frac{\nu^2 - 1/4}{x^2} - \mu_K$ is exactly given by the square root of the left-hand term in \eqref{eqn: assumption 1 xi proof of lemma 4.5}. \\
    
    Since
    \begin{align*}
        \Phi_K''(x) = \left(x^2 + \frac{\nu^2 - 1/4}{x^2} - \mu_K\right)\Phi_K(x),
    \end{align*}
    the sign of $\Phi_K''(x)$ coincides with the one of $\Phi_K(x)$ on $[\sqrt{(1+\delta)\mu_K},+\infty)$. Since $\Phi_K(x)$ is of constant sign on $[\sqrt{(1+\delta)\mu_K},+\infty)$ by \eqref{eqn: largest root eignfnct bound above}, and it tends to zero as $x$ tends to infinity, we get that $|\Phi_K|$ is convex strictly decreasing on $[\sqrt{\mu_K/\tau},+\infty)$. Since, by assumption \eqref{eqn: assumption 1 xi proof of lemma 4.5}, we have $\sqrt{\xi} > \sqrt{(1+\delta)\mu_K}$, it follows from the monotony property that 
    \begin{align}
        \left| \Phi_K \left( \sqrt{\xi} \right) \right| \leq \left|\Phi_K \left(\sqrt{(1+\delta)\mu_K} \right) \right|.
    \end{align}
    
    We now want to use Lemma \ref{lemma: plancherel rotach formula 2} to bound from above the right-hand term of the above estimate. By the definition \eqref{eqn: expression eigenfunctions G laguerre} of $\Phi_k$ in term of the Laguerre polynomial, we obtain from Lemma \ref{lemma: plancherel rotach formula 2}, for $\epsilon, \omega$ arbitrary positive numbers, $x^2 = (4k + 2\nu + 2)\cosh(\theta)^2$, and $0 < \epsilon \leq \theta \leq \omega$,
    \begin{align}\label{eqn: eigenf Plancherel-Rotach 2}
        \Phi_K(x) = (-1)^K \frac{K!}{(1+\nu)_K} \frac{K^{\nu/2 - 1/4}}{2}\frac{e^{(K + (\nu+1)/2)(2\theta - \sinh(2\theta))}}{(\pi \sinh(\theta))^{1/2}}\left( 1 + O\left(\frac{1}{K}\right) \right).
    \end{align}
    Observe that since $\mu_k = 4k + 2(1+\nu)$, the above formula holds for $x^2 = \mu_k \cosh(\theta)^2$. Choosing $\theta$ such that $\cosh(\theta) = \sqrt{1+\delta}$, and since $2\theta - \sinh(2\theta) < 0$, \eqref{eqn: upper bound eignf largest k} directly follows from \eqref{eqn: eigenf Plancherel-Rotach 2}. \footnote{If we take $\tau = 1$, then $\delta = 0$, and we must have $\theta = 0$, in which case $2\theta = \sinh(2\theta) = \sinh(\theta) = 0$, and \eqref{eqn: eigenf Plancherel-Rotach 2} is not valid.} \\
    
    Now, we observe that, thanks to Stirling approximations, as $K \rightarrow + \infty$ (or equivalently $\xi \rightarrow + \infty$ by definition \eqref{eqn: definition K proof lemma 4.5} of $K$) 
    \begin{align*}
        K^{\nu/2}\frac{K!}{(1+\nu)_K} &= K^{\nu/2} \frac{K!}{\Gamma(1+\nu + K)}\Gamma(1+\nu) \\
        &\sim K^{\nu/2} \frac{\sqrt{2\pi K}K^K e^{-K}}{\sqrt{2\pi K} K^{\nu + K}e^{-K}}\Gamma(1+\nu) \\
        &= K^{-\nu/2}\Gamma(1+\nu).
    \end{align*}
    
    Thus, up to possibly taking $\xi_\tau$ larger than in the preceding proof by induction, for some $\delta > 0$ and for every $\xi > \xi_\tau$, we have
    \begin{align*}
        \frac{C_1}{2}{\left\lfloor \frac{\tau \xi }{4} \right\rfloor}^{\nu/2 - 1/4}\frac{\lfloor \frac{\tau \xi }{4} \rfloor!}{(1+\nu)_{\lfloor \frac{\tau \xi }{4} \rfloor}} e^{-C_2\xi } &\leq C_1\frac{\Gamma(1+\nu)}{2}(1+\delta){\left\lfloor \frac{\tau \xi }{4} \right\rfloor}^{-\nu/2 - 1/4} e^{-C_2\xi},
    \end{align*}
    which concludes the proof.
\end{proof}

\begin{proof}[Proof of Lemma \ref{lemma: lower bound norm eignf G on R}]
    We can follow almost verbatim the proof of \cite[Lemma 2.3]{allonsius2021analysis}. Hence, we outline the proof but omit some computational details. \\
    
    Let $0 < \delta < 1$, and $\epsilon \in (0,\pi/2)$ small depending on $\delta$, to be determined later. For every $\xi > 2(1+\nu)/(1-\tau)$, for every $k \leq \lfloor \xi /4 \rfloor$, we have $\sqrt{\xi} > \sqrt{\mu_k} \geq \cos(\epsilon)\sqrt{\mu_k}$ by definition of $\mu_k$ (see Proposition \ref{prop: eigv and eigenf of G on R+}). Hence, 
    \begin{align*}
        \int_{\Omega_\xi } \Phi_k(x)^2 \ dx = \int_0^{\sqrt{\xi}} \Phi_k(x)^2 \ dx \geq  \int_0^{\cos(\epsilon)\sqrt{\mu_k}} \Phi_k(x)^2 \ dx.
    \end{align*}
    Make the change of variable $x = \sqrt{\mu_k}\cos(\theta)$, $\theta \in (\epsilon,\pi/2)$. We get
    \begin{align}\label{eqn: lower bound change variable integral proof lemma 4.6}
        \int_{\Omega_\xi } \Phi_k(x)^2 \ dx \geq  \int_\epsilon^{\pi/2 - \epsilon} \Phi_k(\sqrt{\mu_k}\cos(\theta))^2 \sqrt{\mu_k}\sin(\theta) \ d\theta.
    \end{align}
    Let $\theta$ satisfy $0 < \epsilon \leq \theta \leq 1-\epsilon < \pi/2$. We can apply Lemma \ref{lemma: plancherel rotach formula 1} which gives, for $x^2 = (4k + 2(1+\nu))\cos(\theta)^2 = \mu_k \cos(\theta)^2$ by definition of $\mu_k$ from Proposition \ref{prop: eigv and eigenf of G on R+}, and using the expression \eqref{eqn: expression eigenfunctions G laguerre} of $\Phi_k$ in term of the Leguerre polynomials,
    \begin{align}\label{eqn: eigenf Plancherel-Rotach 1}
        \Phi_k(x) = (-1)^k k^{\nu/2 - 1/4}\frac{k!}{(1+\nu)_k} \frac{\sin\left[\left(k+\frac{\nu + 1}{2}\right)(\sin(2\theta) - 2\theta) + \frac{3\pi}{4}\right] +x^{-1} k^{-1/2}O(1) }{(\pi \sin(\theta))^{1/2}}.
    \end{align}
    Thus, incorporating \eqref{eqn: eigenf Plancherel-Rotach 1} into \eqref{eqn: lower bound change variable integral proof lemma 4.6}, we obtain 
    \begin{align*}
        \int_{\Omega_\xi } \Phi_k(x)^2 \ dx \geq k^{\nu - 1/2}\left(\frac{k!}{(1+\nu)_k}\right)^2 \frac{\sqrt{\mu_k}}{\pi} \int_\epsilon^{\pi/2 - \epsilon} &\left[\sin\left[\left(k+\frac{\nu + 1}{2}\right)(\sin(2\theta) - 2\theta) + \frac{3\pi}{4}\right] \right. \\ 
        & \left.  + (\sqrt{\mu_k}\cos(\theta))^{-1} k^{-1/2}O(1)\right]^2 \ d\theta.
    \end{align*}
    This gives, since $\cos(\theta)$ is bounded, 
    \begin{align*}
     \int_{\Omega_\xi } \Phi_k(x)^2 \ dx \geq \frac{2}{\pi}k^{\nu} \left(\frac{k!}{(1+\nu)_k} \right)^2  \int_\epsilon^{\pi/2 - \epsilon} \left[\sin\left[\left(k+\frac{\nu + 1}{2}\right)(\sin(2\theta) - 2\theta) + \frac{3\pi}{4}\right] +O\left( \frac{1}{k} \right) \right]^2 \ d\theta.
    \end{align*}
    First, we observe here that, as in the proof of Lemma \ref{lemma: upper bound eigenf of G low regime}, thanks to Stirling approximations, as $k \rightarrow + \infty$, 
    \begin{align*}
        k^\nu\frac{k!}{(1+\nu)_k} &= k^\nu \frac{k!}{\Gamma(1+\nu +k)}\Gamma(1+\nu) \\
        &\sim k^\nu \frac{\sqrt{2k\pi}k^k e^{-k}}{\sqrt{2\pi} k^{1/2 +\nu + k}e^{-k}}\Gamma(1+\nu) \\
        &= \Gamma(1+\nu).
    \end{align*}
    Hence, given $\delta \in (0,1)$ in the beginning of the proof, there exists $K_1 \in \mathbb{N}$ such that for every $ K \leq k \leq \lfloor \xi /4 \rfloor$, we have 
    \begin{align*}
     \int_{\Omega_\xi } \Phi_k(x)^2 \ dx \geq \frac{k!}{(1+\nu)_k} \frac{2\Gamma(1+\nu)(1-\delta)}{\pi} \int_\epsilon^{\pi/2 - \epsilon} \left[\sin\left[\left(k+\frac{\nu + 1}{2}\right)(\sin(2\theta) - 2\theta) + \frac{3\pi}{4}\right] +O\left( \frac{1}{k} \right) \right]^2 \ d\theta.
    \end{align*}
    Also, analogous computations to the proof of \cite[Lemma 2.3]{allonsius2021analysis} give that given $\delta \in (0,1)$ in the beginning of the proof, there exists $K_2 \in \mathbb{N}$, such that for every $ k \geq K$, we have,
    \begin{align*}
        \frac{2}{\pi} \int_\epsilon^{\pi/2 - \epsilon} \left[\sin\left[\left(k+\frac{\nu + 1}{2}\right)(\sin(2\theta) - 2\theta) + \frac{3\pi}{4}\right] +O\left( \frac{1}{k} \right) \right]^2 \ d\theta \geq \frac{1 - \delta}{2}.
    \end{align*}
    Hence, it follows, combining both of the above asymptotics, that for every $\max{(K_1,K_2)} \leq k \leq \lfloor \xi / 4 \rfloor$, we have 
    \begin{align*}
         \int_{\Omega_\xi } \Phi_k(x)^2 \ dx \geq  \frac{\Gamma(1+\nu)}{2} \frac{k!}{(1+\nu)_k} (1 - \delta).
     \end{align*}
     For the finite numbers of integers $k \in \{0,...,K-1\}$, we have, by \eqref{eqn: norm eigenfunction on R}, that as $\xi \rightarrow + \infty$,
     \begin{align*}
          \int_{\Omega_\xi } \Phi_k(x)^2 \ dx \rightarrow \| \Phi_k \|_{L^2(\mathbb{R}^+)}^2 = \frac{\Gamma(\nu + 1)}{2}\frac{k!}{(1+\nu)_k},
     \end{align*}
     and the Lemma follows. 
\end{proof}

\section*{Acknowledgments}

The author would like to thank his PhD advisors P. Lissy and D. Prandi for their corrections and suggestions on this paper. The author extends his thanks to M. Morancey for discussions during a conference in Benasque concerning the spectral analysis of his paper \cite{allonsius2021analysis}. The author would also like to thank M. Baur for pointing out the reference \cite{baur2025eigenvalues}, and an anonymous reviewer for valuable comments that helped greatly improve the presentation of this paper. 

\printbibliography

@book{OlverHandbook2010,
author = {Olver, Frank W. and Lozier, Daniel W. and Boisvert, Ronald F. and Clark, Charles W.},
title = {NIST Handbook of Mathematical Functions},
year = {2010},
isbn = {0521140633},
publisher = {Cambridge University Press},
address = {USA},
edition = {1st},
}

@article{allonsius2021analysis,
  title={Analysis of the null controllability of degenerate parabolic systems of Grushin type via the moments method},
  author={Allonsius, Damien and Boyer, Franck and Morancey, Morgan},
  journal={Journal of Evolution Equations},
  volume={21},
  number={4},
  pages={4799--4843},
  year={2021},
  publisher={Springer}
}

@book{szeg1939orthogonal,
  title={Orthogonal polynomials},
  author={Szeg, Gabor},
  volume={23},
  year={1939},
  publisher={American Mathematical Soc.}
}

@article{dunster89,
author = {Dunster, T. M.},
title = {Uniform Asymptotic Expansions for Whittaker’s Confluent Hypergeometric Functions},
journal = {SIAM Journal on Mathematical Analysis},
volume = {20},
number = {3},
pages = {744-760},
year = {1989},
doi = {10.1137/0520052},
URL = {https://doi.org/10.1137/0520052},
eprint = {https://doi.org/10.1137/0520052}
}

@book{watson1922treatise,
  title={A treatise on the theory of Bessel functions},
  author={Watson, George Neville},
  volume={2},
  year={1922},
  publisher={The University Press}
}

@book{erdelyi1957asymptotic,
  title={Asymptotic forms of Whittaker's confluent hypergeometric functions},
  author={Erd{\'e}lyi, Arthur and Swanson, Charles Andrew},
  number={1-25},
  year={1957},
  publisher={American Mathematical Soc.}
}

@book{bateman1953higher,
  title={Higher Transcendental Functions},
  author={Bateman Manuscript Project and Bateman, H. and Erd{\'e}lyi, A. and United States. Office of Naval Research},
  number={vol.~1},
  isbn={9780070195455},
  lccn={53005555},
  series={Higher Transcendental Functions},
  url={https://books.google.fr/books?id=ovpQAAAAMAAJ},
  year={1953},
  publisher={McGraw-Hill}
}

@book{buchholz2013confluent,
  title={The confluent hypergeometric function: with special emphasis on its applications},
  author={Buchholz, Herbert},
  volume={15},
  year={2013},
  publisher={Springer Science \& Business Media}
}

@article{tricomi1947sulle,
  title={Sulle funzioni ipergeometriche confluenti},
  author={Tricomi, Francesco},
  journal={Annali di Matematica Pura ed Applicata},
  volume={26},
  pages={141--175},
  year={1947},
  publisher={Springer}
}

@book{slater1960confluent,
  author    = {Lucy Joan Slater},
  title     = {Confluent Hypergeometric Functions},
  publisher = {Cambridge University Press},
  year      = {1960},
  address   = {Cambridge},
  isbn      = {9780608308982}
}

@article{ahmed1982properties,
  title={Properties of the zeros of confluent hypergeometric functions},
  author={Ahmed, Shafique},
  journal={Journal of Approximation Theory},
  volume={34},
  number={4},
  pages={335--347},
  year={1982},
  publisher={Elsevier}
}

@article{boussaada2022some,
  title={Some remarks on the location of non-asymptotic zeros of Whittaker and Kummer hypergeometric functions},
  author={Boussaada, Islam and Mazanti, Guilherme and Niculescu, Silviu-Iulian},
  journal={Bulletin des Sciences Math{\'e}matiques},
  volume={174},
  pages={103093},
  year={2022},
  publisher={Elsevier}
}

@article{vazquez2000hardy,
  title={The Hardy inequality and the asymptotic behaviour of the heat equation with an inverse-square potential},
  author={Vazquez, Juan Luis and Zuazua, Enrike},
  journal={Journal of Functional Analysis},
  volume={173},
  number={1},
  pages={103--153},
  year={2000},
  publisher={Elsevier}
}

@article{mathews2021physicist,
  title={A physicist's guide to the solution of Kummer's equation and confluent hypergeometric functions},
  author={Mathews Jr, Wesley N and Esrick, Mark A and Teoh, Zu Yao and Freericks, James K},
  journal={arXiv preprint arXiv:2111.04852},
  year={2021}
}

@book{dautray2012mathematical,
  title={Mathematical analysis and numerical methods for science and technology: volume 3 spectral theory and applications},
  author={Dautray, Robert and Lions, Jacques-Louis},
  year={2012},
  publisher={Springer Science \& Business Media}
}

@article{martinez2018cost,
  title={The cost of boundary controllability for a parabolic equation with inverse square potential},
  author={Martinez, Patrick and Vancostenoble, Judith},
  journal={Evolution Equations and Control Theory (EECT)},
  year={2018}
}

@article{allibert1998controle,
  title={Controle analytique de l'equation des ondes et de l'equation de schrodinger sur des surfaces de revolution},
  author={Allibert, Brice},
  journal={Communications in partial differential equations},
  volume={23},
  number={9-10},
  pages={1493--1556},
  year={1998},
  publisher={Taylor \& Francis}
}

@article{darde2023null,
  title={Null-controllability properties of the generalized two-dimensional Baouendi--Grushin equation with non-rectangular control sets},
  author={Dard{\'e}, J{\'e}r{\'e}mi and Koenig, Armand and Royer, Julien},
  journal={Annales Henri Lebesgue},
  volume={6},
  pages={1479--1522},
  year={2023}
}

@article{voros1981spectre,
  title={Spectre de l'{\'e}quation de Schr{\"o}dinger et m{\'e}thode BKW},
  author={Voros, Andr{\'e}},
  journal={Publications Mathematiques d'Orsay},
  year={1981}
}

@inproceedings{simon1983semiclassical,
  title={Semiclassical analysis of low lying eigenvalues. I. Non-degenerate minima: Asymptotic expansions},
  author={Simon, Barry},
  booktitle={Annales de l'IHP Physique th{\'e}orique},
  volume={38},
  number={3},
  pages={295--308},
  year={1983}
}

@inproceedings{helffer1984puits,
  title={Puits de potentiel g{\'e}n{\'e}ralis{\'e}s et asymptotique semi-classique},
  author={Helffer, Bernard and Robert, Didier},
  booktitle={Annales de l'IHP Physique th{\'e}orique},
  volume={41},
  number={3},
  pages={291--331},
  year={1984}
}

@BOOK{Helffer1988-yw,
  title     = "Semi-classical analysis for the Schr{\"o}dinger operator and
               applications",
  author    = "Helffer, Bernard",
  publisher = "Springer",
  series    = "Lecture notes in mathematics",
  edition   =  1988,
  month     =  jul,
  year      =  1988,
  address   = "Berlin, Germany",
  language  = "en"
}

@article{cannarsa2008carleman,
  title={Carleman estimates for a class of degenerate parabolic operators},
  author={Cannarsa, Piermarco and Martinez, Patrick and Vancostenoble, Judith},
  journal={SIAM Journal on Control and Optimization},
  volume={47},
  number={1},
  pages={1--19},
  year={2008},
  publisher={SIAM}
}

@article{montgomery1995hearing,
  title={Hearing the zero locus of a magnetic field},
  author={Montgomery, Richard},
  journal={Communications in mathematical physics},
  volume={168},
  number={3},
  pages={651--675},
  year={1995},
  publisher={Springer}
}

@article{helffer2017semi,
  title={On the semi-classical analysis of the ground state energy of the Dirichlet Pauli operator},
  author={Helffer, Bernard and Sundqvist, Mikael Persson},
  journal={Journal of Mathematical Analysis and Applications},
  volume={449},
  number={1},
  pages={138--153},
  year={2017},
  publisher={Elsevier}
}

@article{frank2009polya,
  title={P{\'o}lya's conjecture in the presence of a constant magnetic field},
  author={Frank, Rupert L and Loss, Michael and Weidl, Timo},
  journal={Journal of the European Mathematical Society},
  volume={11},
  number={6},
  pages={1365--1383},
  year={2009}
}

@article{baur2025eigenvalues,
  title={Eigenvalues of the magnetic Dirichlet Laplacian with constant magnetic field on disks in the strong field limit},
  author={Baur, Matthias and Weidl, Timo},
  journal={Analysis and Mathematical Physics},
  volume={15},
  number={1},
  pages={9},
  year={2025},
  publisher={Springer}
}

@article{exner2002generalized,
  title={Generalized boundary conditions for the Aharonov--Bohm effect combined with a homogeneous magnetic field},
  author={Exner, Pavel and {\v{S}}{\v{t}}oví{\v{c}}ek, P and Vyt{\v{r}}as, P},
  journal={Journal of Mathematical Physics},
  volume={43},
  number={5},
  pages={2151--2168},
  year={2002},
  publisher={American Institute of Physics}
}

@article{vanlaere2026observability,
  title={Observability properties of the singular Grushin equation},
  author={Vanlaere, Roman},
  journal={arXiv preprint arXiv:2602.08044},
  year={2026}
}

@article{helffer2025flux,
  title={Flux effects on Magnetic Laplace and Steklov eigenvalues in the exterior of a disk},
  author={Helffer, Bernard and Kachmar, Ayman and Nicoleau, Fran{\c{c}}ois},
  journal={arXiv preprint arXiv:2508.18119},
  year={2025}
}

@article{helffer2025magnetic,
  title={On the magnetic Dirichlet to Neumann operator on the exterior of the disk-diamagnetism, weak-magnetic field limit and flux effects},
  author={Helffer, Bernard and Nicoleau, Fran{\c{c}}ois},
  journal={Journal de Math{\'e}matiques Pures et Appliqu{\'e}es},
  pages={103799},
  year={2025},
  publisher={Elsevier}
}

@article{kachmar2025magnetic,
  title={The magnetic Laplacian on the disc for strong magnetic fields},
  author={Kachmar, Ayman and Miranda, Germ{\'a}n},
  journal={Journal of Mathematical Analysis and Applications},
  volume={546},
  number={2},
  pages={129261},
  year={2025},
  publisher={Elsevier}
}

@article{kachmar2025laplace,
  title={On the Laplace operator with a weak magnetic field in exterior domains: A. Kachmar et al.},
  author={Kachmar, Ayman and Lotoreichik, Vladimir and Sundqvist, Mikael},
  journal={Analysis and Mathematical Physics},
  volume={15},
  number={1},
  pages={5},
  year={2025},
  publisher={Springer}
}

@article{helffer2017semiclassical,
  title={On the Semiclassical Analysis of the Ground State Energy of the Dirichlet Pauli Operator in Non-Simply Connected Domains},
  author={Helffer, B and Sundqvist, M Persson},
  journal={Journal of Mathematical Sciences},
  volume={226},
  number={4},
  pages={531--544},
  year={2017}
}

@article{helffer2025quantum,
  title={Quantum tunneling and the Aharonov-Bohm effect},
  author={Helffer, Bernard and Kachmar, Ayman},
  journal={Journal of Differential Equations},
  volume={448},
  pages={113686},
  year={2025},
  publisher={Elsevier}
}

@article{fournais2024counting,
  title={Counting eigenvalues below the lowest Landau level},
  author={Fournais, Soeren and Kachmar, Ayman},
  journal={arXiv preprint arXiv:2406.06411},
  year={2024}
}

\end{document}